\renewcommand{\paragraph}{%
  \@startsection{paragraph}{4}%
  {\z@}{1.6ex \@plus 1ex \@minus .2ex}{-0.5em}%
  {\normalfont\normalsize\bfseries}%
}
\theoremstyle{plain}
\newtheorem{theorem}{Theorem}[section]
\newtheorem{proposition}[theorem]{Proposition}
\newtheorem{corollary}[theorem]{Corollary}
\theoremstyle{definition}
\newtheorem{remark}[theorem]{Remark}
\newtheorem{definition}[theorem]{Definition}
\newtheorem{example}[theorem]{Example}
\newcommand*{\claimproofname}{Proof of claim.}
\def\final{1}  % set this to 1 to get a comment-free version
\def\iflong{\iffalse}
\newcommand{\adam}[1]{{\color{red}[{ \textbf{Adam:}  #1}]\marginpar{\color{red}*}}}
\newcommand{\lorenzo}[1]{{\color{teal}[{ \textbf{Lorenzo:}  #1}]\marginpar{\color{teal}*}}}
\newcommand{\adam}[1]{}
\newcommand{\lorenzo}[1]{}
\DeclareMathOperator\len{len}
\newcommand{\bN}{\mathbb{N}}
\newcommand{\bQ}{\mathbb{Q}}
\newcommand{\bR}{\mathbb{R}}
\newcommand{\cC}{\mathcal{C}}
\newcommand{\zero}{\hat{0}}
\newcommand{\one}{\hat{1}}
\newcommand{\FY}{\operatorname{FY}}
\newcommand{\SFY}{\operatorname{SFY}}
\newcommand{\SI}{\operatorname{SI}}
\let\Right\bigr
\let\Left\bigl
\def\bigr#1{\Right#1\@ifnextchar){\!\bigr}{}}
\def\bigl#1{\Left#1\@ifnextchar({\!\bigl}{}}
\title[Kähler algebras and Chow polynomials]{Existence of Kähler algebras with \\ Chow polynomials as Hilbert series}
\author[A. Schweitzer]{Adam Schweitzer}
\address{Department of Mathematics, KTH Royal Institute of Technology, Stockholm, Sweden}
\email{adasch@kth.se}
\author[L. Vecchi]{Lorenzo Vecchi}
\address{Department of Mathematics, KTH Royal Institute of Technology, Stockholm, Sweden}
\email{lvecchi@kth.se}
\keywords{Artinian algebras, Chow polynomials, Poset theory, Monomial order ideals, O-sequences, Kähler package}
\subjclass[2020]{05E40, 06A07, 06A11, 13D40}
\begin{document}

\begin{abstract}
In this article, we study Chow polynomials of weakly ranked posets and prove the existence of Gorenstein algebras with the Kähler package such that their Hilbert--Poincaré series agrees with the Chow polynomial.
Our statement provides evidence in support of a conjecture by Ferroni, Matherne and the second author about the existence of an algebra for every weakly ranked poset that generalizes the Feichtner--Yuzvinsky Chow ring for matroids.
This allows us to prove strong inequalities for the coefficients of Chow polynomials; we prove log-concavity for all posets of weak rank at most six and provide counterexamples to log-concavity for any higher rank.
For ranked posets we recover an even stronger condition, showing that the differences between consecutive coefficients constitute a pure $O$-sequence.
\end{abstract}

\maketitle

\section{Introduction}
A central theme in the study of enumerative invariants of posets is the extent to which integer sequences satisfy strong regularity properties, such as unimodality, palindromicity, log-concavity, or the existence of combinatorial or algebraic models realizing them as Hilbert--Poincaré series. A classical example of such a construction is due to Stanley \cite{gThmNecessity} and independently McMullen \cite{gThmNecessity2}, who construct for every simplicial polytope $P$ a graded ring whose Hilbert--Poincaré series coincides with the \emph{$h$-vector} of $P$.
Chow polynomials of partially ordered sets, introduced recently by Ferroni, Matherne, and the second author, provide a unifying framework for encoding subtle enumerative and algebraic properties of posets \cite{ferroni-matherne-vecchi}. The name comes from the theory of combinatorial algebraic geometry for matroids; when the poset is a geometric lattice, it coincides with the Hilbert--Poincaré series of the Chow ring of the associated matroid, as defined by Feichtner and Yuzvinsky \cite{FY}. In this setting, Adiprasito, Huh and Katz proved that the Chow ring of a matroid satisfies a combinatorial analogue of the Kähler package \cite{chowring}. This collection of properties includes Poincaré duality, strong Lefschetz property and the Hodge--Riemann relations, which imply the palindromicity and unimodality of the corresponding Chow polynomials. For general posets there is not such a ring-theoretic interpretation, however similar properties still hold. In particular, it is known that the coefficients of the characteristic Chow polynomial of any poset are non-negative, palindromic, and unimodal, and that they are also $\gamma$-positive for Cohen--Macaulay posets \cite{ferroni-matherne-vecchi}. 
An outstanding open problem is whether a ring with Poincaré duality and the strong Lefschetz property can be constructed for any poset so that its Hilbert--Poincaré series coincides with the Chow polynomial \cite[Section 4.6]{ferroni-matherne-vecchi}.

In this paper, we will show that there exist algebras with these properties for any weakly ranked poset. This serves as further evidence for the possibility of finding a suitable definition for the generalization of Chow rings. 
Let us denote by $h = h(P) = (h_0,h_1, \ldots,h_{n-1})$ the sequence of coefficients of the Chow polynomial of a given poset $P$ of weak rank $n$. 
Our first main result is the following.
\begin{theorem}\label{thm:main1}
    Let $P$ be a weakly ranked, 
    finite, bounded poset of weak rank $n$. Then $h(P)$ forms a $\SI$-sequence. 
\end{theorem}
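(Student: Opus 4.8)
The plan is to verify the two conditions defining an $\SI$-sequence — symmetry of $h(P)$, and the property that the first half of its difference vector is an $O$-sequence. Write $\underline{H}_P(x) = \sum_{i=0}^{n-1} h_i x^i$ and set $d = n-1$. Recall that a symmetric sequence $(h_0,\dots,h_d)$ with $h_0 = 1$ is an $\SI$-sequence precisely when $(1,\ h_1 - h_0,\ h_2 - h_1,\ \dots,\ h_{\lfloor d/2\rfloor} - h_{\lfloor d/2\rfloor - 1})$ is an $O$-sequence. The symmetry $h_i = h_{d-i}$ (in particular $h_d = h_0 = 1$) — palindromicity of the Chow polynomial of a bounded weakly ranked poset — is available from \cite{ferroni-matherne-vecchi}; the whole content therefore lies in the $O$-sequence assertion, which I would prove by producing, from the combinatorics of $P$, a monomial order ideal whose $f$-vector is exactly this difference vector.

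For that I would argue by induction on $\lvert P\rvert$, using the recursive description of the Chow polynomial: $\underline{H}_P(x)$ is assembled from the Chow polynomials of the proper intervals of $P$ together with characteristic-type data, through an identity of ``semismall type'' modeled on the decomposition of matroid Chow rings in \cite{chowring}. The inductive hypothesis should be strengthened from ``$h(Q)$ is an $\SI$-sequence for every proper interval $Q$'' to ``an explicit monomial order ideal realizing the corresponding difference vector is at hand''; one then glues these order ideals, placed in the degree ranges dictated by the recursion, into a candidate order ideal for $P$ and checks that its $f$-vector is the required difference vector. In the matroid case this gluing is precisely what endows $\mathrm{CH}(M)$ with the strong Lefschetz property; here one keeps only the numerical shadow, which is robust enough to survive in the weakly ranked generality and, via known realizations of $\SI$-sequences by Gorenstein algebras with the strong Lefschetz property — indeed, with the full Kähler package — also yields a Gorenstein algebra carrying the Kähler package with Hilbert--Poincaré series $\underline{H}_P(x)$, the headline statement of the paper.

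The main obstacle is exactly this gluing step: showing that the operations in the Chow recursion keep us inside the class of $O$-sequences. Schematically the difference vector of $P$ is a sum $g_Q + x^{k}g_{Q'} + \cdots$ of (shifted) difference vectors of proper intervals, and such sums need not be $O$-sequences — already $(1,1,1) + x^2\cdot(1,\dots) = (1,1,2,\dots)$ fails, since $h_1 = 1$ forces $h_i\le 1$ for all $i$ by Macaulay's theorem. One must therefore show that the shifts produced by the recursion separate the summands far enough that Macaulay's growth bound is preserved — equivalently, that a generic linear form is a weak Lefschetz element on the assembled algebra. This is the ``semismall numerology'' of the recursion, and establishing it uniformly for weakly ranked posets, where the characteristic-type ingredients are far less well-behaved than for matroids, is the technical heart of the argument.

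For ranked posets the same bookkeeping can be pushed further: the pieces align so that the assembled order ideal may be taken pure, giving the stronger conclusion that the difference vector is a pure $O$-sequence. Purity is what breaks down for merely weakly ranked $P$, while the plain $O$-sequence property — all that $\SI$ requires — persists; this is why ``weakly ranked'' is the natural hypothesis in \cref{thm:main1}.
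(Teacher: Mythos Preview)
Your proposal is a plan, not a proof: you correctly isolate palindromicity as already known and correctly identify that the content is producing a monomial order ideal realizing the $g$-vector, but the step you yourself flag as ``the technical heart of the argument'' --- showing that the recursive gluing of order ideals for proper intervals stays inside the class of $O$-sequences --- is never carried out. You give the counterexample $(1,1,1)+x^2(1,\dots)$ showing that naive superposition fails, assert that the ``semismall numerology'' of the recursion must rescue it, and stop. There is no lemma, no inequality, no verification that for an arbitrary weakly ranked poset the shifts in whatever recursion you have in mind are large enough to avoid Macaulay violations. Nor is it clear which recursion you intend: the defining formula in the paper sums over \emph{all} chains of $P$, which is not obviously of semismall shape, and the Braden--Huh--Matherne--Proudfoot--Wang semismall decomposition is a statement about matroid Chow rings that has no established analogue for general weakly ranked posets --- indeed, the absence of such a ring is precisely the motivation for the paper.

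The paper's proof takes a completely different, non-inductive route. It works directly with the Feichtner--Yuzvinsky monomial basis $\FY$ of the would-be Chow ring, whose graded cardinalities already give $h(P)$. The poset $\FY$ is partitioned into pieces $\FY_\cC$ indexed by chains $\cC$ of $P$, each isomorphic to a product of chains and symmetric inside $\FY$. A carefully chosen explicit symmetric chain decomposition of each product of chains (Theorem~\ref{thm:chain-decomp}) yields a symmetric chain decomposition of all of $\FY$; the set $\SFY$ of initial elements of these chains then has $g(P)$ as its rank-vector. The entire argument reduces to checking that $\SFY$ is closed under divisibility (Theorem~\ref{thm:SFY monomial order ideal}), which is a short direct verification from the explicit description of $\SFY$. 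No induction on $|P|$, no recursion for $\underline{H}_P$, and no Macaulay growth bounds are used. The purity statement for ranked $P$ (Theorem~\ref{thm:main2}) is then a separate analysis of the maximal monomials in this same $\SFY$, rather than a refinement of a gluing procedure.
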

See Definitions \ref{def:O} and \ref{def:SI} for the definition of $\SI$-sequence. This statement is equivalent to saying that $h(P)$ is the $h$-vector of a simplicial polytope, see also Theorem \ref{thm:characterization SI sequence} below. The proof proceeds by giving an explicit combinatorial construction of a monomial order ideal whose graded pieces realize the successive differences of the Chow coefficients. A key ingredient is an analysis of Feichtner–-Yuzvinsky monomials, combined with symmetric chain decompositions of products of chains. 
Theorem \ref{thm:main1} allows us to state the following corollary immediately.

\begin{corollary}\label{cor:main lefschetz}
    Let $P$ be a weakly ranked, finite, bounded poset of weak rank $n$. Then there exists a Gorenstein algebra with Kähler package whose Hilbert--Poincaré series coincides with the Chow polynomial of $P$.
\end{corollary}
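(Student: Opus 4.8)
The plan is to read the corollary directly off Theorem~\ref{thm:main1} by feeding its conclusion into the $g$-theorem for simplicial polytopes. By Theorem~\ref{thm:main1}, the coefficient sequence $h(P) = (h_0, \dots, h_{n-1})$ of the Chow polynomial is an $\SI$-sequence, and by Theorem~\ref{thm:characterization SI sequence} any $\SI$-sequence of length $n$ is the $h$-vector of a simplicial polytope; hence I obtain a simplicial polytope $Q$ of dimension $n-1$ with $h(Q) = h(P)$. This is the only point at which Theorem~\ref{thm:main1} is used: it turns the abstract poset $P$ into a geometric object whose $h$-vector is the Chow polynomial of $P$.

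Next I would build the algebra. Let $\Delta = \partial Q$ be the boundary complex of $Q$, a simplicial homology $(n-2)$-sphere, hence Cohen--Macaulay and in fact Gorenstein${}^{*}$, and fix a field $\bK$ of characteristic zero (one needs $\bK = \bR$ for the Hodge--Riemann part). The Stanley--Reisner ring $\bK[\Delta]$ has Krull dimension $n-1$; choosing a general linear system of parameters $\theta_1, \dots, \theta_{n-1} \in \bK[\Delta]_1$, which is a regular sequence by Cohen--Macaulayness, set $A \coloneqq \bK[\Delta]/(\theta_1, \dots, \theta_{n-1})$. Then $A$ is a standard graded Artinian $\bK$-algebra whose Hilbert function is the $h$-vector of $\Delta$, namely $h(Q) = h(P)$, so its Hilbert--Poincaré series equals $\sum_{i=0}^{n-1} h_i(P)\, t^i$, the Chow polynomial of $P$. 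Since $\Delta$ is Gorenstein${}^{*}$, the ring $A$ is Gorenstein with one-dimensional socle in degree $n-1$; in particular it satisfies Poincaré duality.

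It remains to exhibit a Lefschetz element. For a general linear form $\ell \in A_1$, the strong Lefschetz theorem for simplicial polytopes asserts that $\cdot\, \ell^{\, n-1-2i} \colon A_i \to A_{n-1-i}$ is an isomorphism for all $i \le (n-1)/2$, and that the associated bilinear pairings satisfy the Hodge--Riemann relations. When $Q$ can be taken rational this is Stanley's consequence of hard Lefschetz and the Hodge--Riemann relations for the projective toric variety $X_Q$ \cite{gThmNecessity}; for arbitrary simplicial polytopes it is McMullen's theorem \cite{gThmNecessity2}, established via his polytope algebra. Together with the previous paragraph this shows that $A$ is a Gorenstein $\bK$-algebra carrying the full Kähler package and with Hilbert--Poincaré series equal to the Chow polynomial of $P$, as claimed.

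The main obstacle is not the deduction itself, which is essentially bookkeeping once Theorem~\ref{thm:main1} and the $\SI$-sequence characterization are in hand, but making sure the invoked form of the Kähler package matches the statement of the corollary: Poincaré duality is immediate from Gorenstein-ness, but the strong Lefschetz property and the Hodge--Riemann relations for non-rational simplicial polytopes genuinely require the combinatorial hard Lefschetz machinery, so the cleanest route is to cite it as a black box rather than to reprove anything.
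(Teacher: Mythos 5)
Your proposal is correct and follows essentially the same route as the paper: the paper simply invokes Theorem~\ref{thm:main1} together with the equivalence in Theorem~\ref{thm:characterization SI sequence} (SI-sequence $\Leftrightarrow$ Hilbert--Poincaré series of a Gorenstein algebra with the Kähler package) as a black box. What you have done is unpack that black box — Billera--Lee polytope, Artinian reduction of the Stanley--Reisner ring, McMullen's hard Lefschetz — which is exactly the content of the cited characterization, so nothing is gained or lost.
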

 
Furthermore, we are able to show that if the poset is ranked, this monomial ideal is pure, i.e., all its maximal monomials are of the same degree. This is the second main result of our paper.
\begin{theorem}\label{thm:main2}
    Let $P$ be a ranked, finite, bounded poset of rank $n$. Then the sequence
    \[
    \left(h_0,h_1 - h_0, \ldots,h_{\left\lfloor\frac{n-1}{2}\right\rfloor} - h_{\left\lfloor\frac{n-1}{2}\right\rfloor - 1}\right)
    \]
    is a pure $O$-sequence.
\end{theorem}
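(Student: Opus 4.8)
My plan is to refine the monomial order ideal construction that underlies Theorem \ref{thm:main1} and show that, in the ranked (not merely weakly ranked) case, the ideal realizing the successive differences $h_i - h_{i-1}$ up to degree $\lfloor (n-1)/2 \rfloor$ is in fact pure. Recall that a pure $O$-sequence is the $f$-vector (shifted) of a pure monomial order ideal, equivalently the Hilbert function of a monomial Artinian level algebra; so the goal is to exhibit a monomial order ideal whose maximal monomials all have degree $\lfloor (n-1)/2 \rfloor$ and whose graded piece in degree $i$ has dimension $h_i - h_{i-1}$. The construction from the proof of Theorem \ref{thm:main1} produces an order ideal out of Feichtner--Yuzvinsky monomials together with symmetric chain decompositions of products of chains; the point is that when $P$ is ranked, all the chains involved in these products have lengths governed by the rank function, and the symmetric chain decomposition of a product of chains each of the \emph{same relevant length} yields pieces that are genuinely symmetric and saturate at the top middle degree.

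The key steps, in order, are as follows. First, I would recall precisely the monomial order ideal $\cM$ from the proof of Theorem \ref{thm:main1} and isolate the combinatorial data it depends on: in the weakly ranked case this data is the multiset of ``gaps'' along maximal chains, and the order ideal decomposes as a disjoint union indexed by flags (or FY-monomials) of symmetric-chain pieces coming from products of chains of those gap-lengths. Second, I would observe that rankedness forces each such product of chains to be a product of chains of equal length in the appropriate normalization, so that the symmetric chain decomposition (via the standard de Bruijn--Tengbergen--Kruyswijk argument, or the explicit lattice-path description of SCDs of $[a]^k$) has \emph{all} its maximal chains terminating in the same degree $\lfloor (n-1)/2 \rfloor$. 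Third, I would assemble these pieces: the union over the indexing set of order ideals, each pure of the same top degree, is again an order ideal that is pure of that degree, provided the pieces are compatible (e.g.\ they live in disjoint sets of variables, or are combined via a join/Segre-type operation that preserves purity). Fourth, I would check the Hilbert function bookkeeping: the degree-$i$ count equals $h_i - h_{i-1}$ by the same identity already established in Theorem \ref{thm:main1}, now restricted to $i \le \lfloor (n-1)/2 \rfloor$ where the differences are known to be non-negative by unimodality/palindromicity of the Chow polynomial. Concluding, purity of $\cM$ together with the fact that its $f$-vector is $(h_0, h_1 - h_0, \ldots)$ gives exactly the claimed pure $O$-sequence.

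The main obstacle I anticipate is step three: showing that the \emph{union} of the flag-indexed symmetric-chain pieces is pure. Individually each product-of-chains piece is pure once the chains have equal length, but a maximal monomial of the whole ideal $\cM$ need not be maximal within a single piece --- it could be dominated by a monomial coming from a different flag. So the real work is to organize the FY-monomials (or the flags of $P$) so that the pieces nest correctly: one wants every monomial in a ``shorter'' piece to be divisible into a monomial of a ``longer'' piece reaching degree $\lfloor (n-1)/2 \rfloor$, which is where rankedness must be used in an essential way (in the merely weakly ranked case this nesting fails, and indeed there is no reason to expect purity). I would handle this by setting up a partial order on the indexing flags compatible with the divisibility structure and arguing that the ``longest'' flags --- those whose associated product of chains has the maximal exponent sum $\lfloor (n-1)/2 \rfloor$ --- already dominate all the others; establishing this domination statement is the crux, and I expect it to follow from a careful comparison of rank-jump sequences along maximal chains of a ranked bounded poset together with the explicit combinatorics of FY-monomials used already in Theorem \ref{thm:main1}.
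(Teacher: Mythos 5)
Your high-level plan is the right one --- the paper also proves Theorem \ref{thm:main2} by showing that the order ideal $\SFY$ constructed for Theorem \ref{thm:main1} is pure --- but the proposal has a genuine gap exactly at the point you flag as the crux, and it rests on a false intermediate claim. The claim in your second step, that rankedness forces each flag's product of chains to be a product of chains of equal length, is not true: the piece $\FY_\cC$ attached to a support chain $\zero = p_0 < p_1 < \cdots < p_s < \one$ is isomorphic to $C_{d_1-1}\times\cdots\times C_{d_s-1}\times C_{d_{\one}}$ with $d_i = \rho(p_i)-\rho(p_{i-1})$, and these gaps are arbitrary positive integers even when $P$ is ranked, because the support chain of an FY monomial need not be saturated. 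Consequently the individual flag-indexed pieces are not pure (the piece of the empty chain contributes only the monomial $1$, for instance), and purity can only hold for the union --- which is precisely the domination statement you defer with ``I expect it to follow from a careful comparison.'' Since that statement is the entire content of the theorem, deferring it leaves the proof incomplete.

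What rankedness actually buys, and what the paper uses, is chain refinability: in a ranked bounded poset the interval below any $p_k$ of rank $k\geq 2$ contains an element of rank $2$, and the interval above $p_k$ contains elements of every intermediate rank. The paper's proof is then a direct case analysis on a putative maximal monomial $m = x_{p_k}^{\ell}m'$ of $\SFY$ with degree below $\lfloor (n-1)/2\rfloor$, where $p_k$ is the bottom of the support: using the explicit inequalities defining $\SFY$, one shows that it is always possible either to increase $\ell$, to increase the degree of the upper part $m'$, or to adjoin a new variable $x_{p_2}$ for a rank-$2$ element below $p_k$, contradicting maximality. This is where the ``across flags'' comparison you anticipate is actually carried out, and the example following the proof (Figure \ref{fig:non-pure}) shows that this extension step genuinely fails for weakly ranked posets. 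To complete your proposal you would need to supply this extension argument explicitly; the nesting-of-flags partial order you sketch is not set up, and without it the purity claim is unproved.
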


As an application, we study the log-concavity of Chow polynomials. Using general properties of $O$-sequences, we show that Chow polynomials are log-concave for all posets of weak rank at most six. We also construct explicit counterexamples in every higher weak rank.

%\adam{I would make the first a corollary as it follows right away from 1.1, for the second, I would maybe phrase it like for a rank r, every weakly ranked poset $P$ has log concavity for its Chow polynomial, exactly if $r\leq 6$.}
The paper is organized as follows. In Section \ref{sec:background} we recall the necessary background on standard graded algebras, Chow polynomials, monomial order ideals and $O$-sequences and $\SI$-sequences. In Section \ref{sec:symmetric chain decomp} we recall and prove relevant results about symmetric chain decompositions. Section \ref{sec:main1} contains the proof of Theorem \ref{thm:main1} and hence Corollary \ref{cor:main lefschetz}. Section \ref{sec:main2} proves Theorem \ref{thm:main2}. Lastly, Section \ref{sec:log-concavity} discusses the log-concavity of Chow polynomials. 

\section*{Acknowldgments}
The authors wish to thank Petter Brändén and Luis Ferroni for useful discussions.

\section{Background}\label{sec:background} In this section we quickly review the necessary tools that we need to prove our results.
\subsection{Properties of sequences}
Given a finite sequence of integers $(a_0,a_1, \ldots, a_d)$ or, equivalently, the polynomial $p(t) = \sum_{i=0}^{d}a_it^i$, we say that it is
\begin{itemize}
    \item \emph{palindromic with center of symmetry $d/2$} if $a_i = a_{d-i}$ for every $0\leq i \leq d$,
    \item \emph{unimodal} if there exists an index $j$ such that 
    \[
    a_0 \leq a_1 \leq \ldots \leq a_{j-1} \leq a_j \geq a_{j+1} \geq \ldots a_d,
    \]
    \item \emph{log-concave} if $a_i^2 \geq a_{i-1}a_{i+1}$ for every $1 \leq i \leq d-1$,
    \item \emph{$\gamma$-positive} if $p(t)$ is palindromic and we can write
    \[
    p(t) = \sum_{i=0}^{\lfloor d/2\rfloor}\gamma_i t^i(1+t)^{d-2i}
    \]
    with all the coefficients $\gamma_i \geq 0$,
    \item \emph{real-rooted} if all the zeros of $p(t)$ are real.
\end{itemize}

For two sequences corresponding to polynomials $p,q$ we refer to the sequence of coefficients of their product $pq$ as their \emph{convolution}.
It is well known that the following chains of (strict) implications hold:
\[
\text{real-rooted} \implies \text{log-concave} \implies\text{unimodal}
\]
and, when $p(t)$ is palindromic,
\[
\text{real-rooted} \implies \text{$\gamma$-positive} \implies\text{unimodal}.
\]

\subsection{Standard graded algebras}

A graded algebra $$A=A_0\oplus A_1 \oplus \ldots$$
over a ring $R$ is said to be a \emph{standard graded algebra} if $A_0\simeq R$ and $A$ is generated by the elements of $A_1$ as an algebra.
In this paper, we will consider graded Artinian algebras over fields, that is, we assume that we may express $A$ as $A=K[x_1,x_2,\ldots,x_d]/I$ and that $$A=A_0\oplus A_1 \oplus \ldots \oplus A_e.$$
We refer to the series $$(\dim_K(A_0),\dim_K(A_1)\ldots,\dim_K(A_e))$$ 
or, equivalently, to the polynomial $H_A(t) \coloneq \sum_{i=0}^e \dim_K(A_i)t^i$ as the \emph{Hilbert--Poincaré series of $A$}. Let us introduce a few important concepts in the study of such algebras. 

\begin{definition}
    Let $A= \bigoplus_{i=0}^e A_i$ be a finite-dimensional standard graded Artinian algebra with $\dim_K(A_e) = 1$ and let $d: A_e\to K$ be an isomorphism. We say that $(A,d)$ has \emph{Poincaré duality} if the bilinear maps $$A_k\times A_{e-k}\to K \qquad (x,y) \mapsto d(xy)$$ are all non-degenerate.
\end{definition}

An immediate consequence of Poincaré duality is that the Hilbert--Poincaré series is palindromic.
A standard graded Artinian algebra over a field is called \emph{Gorenstein} if and only if it exhibits Poincaré duality for a choice of $d$ \cite[Section 1B]{gorensteinpdduality}.

\begin{definition}
    An Artinian Gorenstein algebra $A=\bigoplus_{i=0}^e A_i$ satisfies the \emph{Weak Lefschetz property} if there exists an element $\ell \in A_1$ such that the map $$\times \ell : A_k\to A_{k+1}$$ 
    given by the multiplication by $\ell$ is injective for $k\leq \lfloor (e-1)/2\rfloor$ and surjective for $k\geq \lfloor e/2\rfloor$. 
    We say that $A$ satisfies the \emph{Strong Lefschetz property} if there exists an element $\ell \in A_1$ such that the map $$\times \ell^{e-2k}: A_k\to A_{e-k}$$ is an isomorphism for every $k$. We refer to such an element $\ell$ as a \emph{Lefschetz element}.   
\end{definition}
Note that the Strong Lefschetz property implies the Weak Lefschetz property, thus implying unimodality for the Hilbert--Poincaré series. For a more thorough introduction to the Weak and Strong Lefschetz property, we refer the reader to \cite{Lefschetz}.

\begin{definition}
    Let $A=\bigoplus_{i=0}^e A_i$ be an Artinian Gorenstein algebra over $\bR$ with a Lefschetz element $\ell$. We say that $A$ satisfies the \emph{Hodge--Riemann relations} if the bilinear maps defined as $$A_k\times A_k\to \bR \quad (x,y) \mapsto  (-1)^k d(xy\ell^{e-2k}),$$
    are positive definite when restricted to ${\ker(\ell^{e-2k+1})\subset A_k}$.  
\end{definition}

An Artinian Gorenstein algebra over $\bR$ has the \emph{Kähler package}, if it satisfies Poincaré duality, the Strong Lefschetz property, and the Hodge-Riemann relations.

\subsection{Chow polynomials}
In this section, we briefly recall the definition of our main object of interest, i.e., the \emph{Chow polynomial} of a \emph{partially ordered set} (or poset for short). Recall that an interval of a poset is a subposet of $P$ of the form $[x,y]= \{z \in P : x\leq z \leq y\}$, where $x,y\in P$. 
A poset $P$ is \emph{locally finite} if each of its intervals is finite. We also say that a poset is \emph{bounded} if it has unique least and largest elements, which we denote by $\zero$ and $\one$, respectively.
Given a poset $P$, a \emph{weak rank function} is a function $\rho: P\times P \to \bN$ such that 
\begin{itemize}
    \item $\rho(x,y) = \rho_{x,y} > 0$ if and only if $x < y$, and
    \item  $\rho_{x,y} = \rho_{x,z} + \rho_{z,y}$, for all  $x\leq z \leq y$ in $P$.
\end{itemize}
A \emph{weakly ranked} poset consists of a pair $(P,\rho)$, where $\rho$ is a weak rank function for $P$. By slight abuse of notation, we say that $P$ is a weakly ranked poset, when this does not create confusion. 
If the poset has a least element $\zero$, we write $\rho(x):= \rho_{\zero,x}$ for an element $x \in P$ and call this the \emph{weak rank} of $x$ in $P$. If $\rho_{x,y} =1$ whenever $y$ covers $x$, then we say that $P$ is \emph{ranked}. Moreover, if $P$ is bounded, then we say that $P$ has weak rank $\rho(\one)$. Notice that if a poset is bounded and locally finite, then in particular it is finite. All the posets considered in this paper are finite, bounded and weakly ranked.
Lastly, we will denote by $W(P) = (W_0,W_1,\ldots W_{\rho(\one)})$, where $W_i=|\{p\in P | \rho(p)=i\}|,$ the sequence of \emph{Whitney numbers of the first kind of $P$}.

While the following is not the standard definition of Chow polynomials, it is the one which is going to be more convenient for us. 
\begin{definition}[{\cite[Theorem~4.1]{ferroni-matherne-vecchi}}]\label{def:chow polynomial}
    Let $P$ be a weakly ranked poset. Then its Chow polynomial is
    \[
    H_P(t) = \sum_{s \geq 0}\sum_{\zero=p_0 < p_1 < \cdots < p_s \leq \one} \prod_{i=1}^{s}\frac{t\left(t^{\rho(p_i)-\rho(p_{i-1})-1}-1 \right)}{t-1} .
    \]
\end{definition}
\begin{remark}
The polynomial $H_P$ is usually called \emph{characteristic} Chow polynomial, as it is proven to be the inverse of the negative reduced characteristic function inside the incidence algebra of the poset $P$. Indeed, the original construction in \cite{ferroni-matherne-vecchi} associates a Chow polynomial to any poset with a choice of extra data called \emph{$P$-kernel}. We also observe that this definition would associate a polynomial to every interval in a poset, that would not need to be finite (only locally finite) nor bounded. However, since the characteristic Chow polynomial is invariant under isomorphism of posets, we can reduce ourselves to studying bounded, finite posets. Since in this article we are only concerned with the characteristic Chow polynomial, we choose to ease the notation and only refer to it as \emph{Chow polynomial}.
\end{remark}

The following is a summary of the properties that Chow polynomials are proven to satisfy {\cite{ferroni-matherne-vecchi}}.
\begin{theorem}\label{thm:properties of characteristic Chow}
    For any poset $P$, the Chow polynomial is a monic polynomial of degree $\rho(P)-1$ with non-negative, palindromic and unimodal coefficients. Moreover, if $P$ is Cohen--Macaulay, then it is also $\gamma$-positive. 
\end{theorem}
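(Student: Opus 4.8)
The plan is to read the easy properties straight off the chain formula of Definition~\ref{def:chow polynomial} and to isolate palindromicity as the one statement requiring genuine work. Write $g_k(t) := \frac{t(t^{k-1}-1)}{t-1} = t + t^2 + \cdots + t^{k-1}$ for the factor attached to a step of weak-rank difference $k$, so that $g_1 = 0$ and every $g_k$ has non-negative coefficients. Then $H_P$ is a sum of products of the $g_k$, so all of its coefficients are non-negative with no possible cancellation. For the degree, a chain $\zero = p_0 < \cdots < p_s$ contributes a polynomial of top degree $\sum_i (\rho(p_i)-\rho(p_{i-1})-1) = \rho(p_s) - s$; since $\rho(p_s) \le \rho(\one) = n$ and $s \ge 1$, this is at most $n-1$, with equality forcing both $\rho(p_s) = n$ (so $p_s = \one$, the unique element of weak rank $n$, as $\rho(x,\one) > 0$ for $x < \one$) and $s = 1$. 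Thus the top degree $n-1$ is attained only by the chain $\zero < \one$, contributing $g_n(t)$, so $H_P$ is monic of degree $n-1$ with no cancellation at the top.

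For palindromicity, the starting observation is the elementary identity $g_k(1/t) = t^{-k} g_k(t)$, i.e. each factor is palindromic about $k/2$. Consequently the contribution of a chain ending at $x = p_s$ is palindromic about $\rho(x)/2$, and grouping chains by their top element $x$ as $F_x(t) := \sum_{\zero=p_0<\cdots<p_s=x}\prod_i g_{\rho(p_i)-\rho(p_{i-1})}(t)$ gives $t^{n-1}H_P(1/t) = \sum_{x\le\one} t^{n-1-\rho(x)}F_x(t)$. Hence palindromicity $t^{n-1}H_P(1/t) = H_P(t)$ is equivalent to the cancellation identity
\[
\sum_{\zero\le x\le\one}\left(t^{n-1-\rho(x)}-1\right)F_x(t)=0.
\]
To prove it I would use the characterization recalled in the Remark after Definition~\ref{def:chow polynomial}: $H$ is the inverse, in the incidence algebra, of the negative reduced characteristic function, where $\overline{\chi}_{[x,y]}(t) = \frac{1}{t-1}\sum_{x\le z\le y}\mu(x,z)t^{\rho(z,y)}$. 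Inverting on $[\zero,\one]$ yields the recursion
\[
H_P(t) = \sum_{\zero\le z<\one} H_{[\zero,z]}(t)\,\overline{\chi}_{[z,\one]}(t),
\]
after which one argues by induction on the weak rank. The subtle point, and the main obstacle, is that $\overline{\chi}_{[z,\one]}$ lacks the symmetry about $\rho(z,\one)/2$ that a factorwise argument would need (for a chain it is merely the monomial $t^{\rho(z,\one)-1}$), so the symmetry of $H_P$ emerges only after the cancellation among the terms of the recursion. This is exactly the Kazhdan--Lusztig--Stanley reciprocity for the characteristic $P$-kernel, and carrying it out is the heart of the proof.

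Once palindromicity about $(n-1)/2$ is established, unimodality is equivalent to non-negativity of the successive differences $h_i - h_{i-1}$ for $i \le (n-1)/2$, equivalently to expressing $H_P$ as a non-negative combination of the symmetric interval polynomials $t^i + t^{i+1} + \cdots + t^{n-1-i}$. In the Cohen--Macaulay case I would instead prove the stronger $\gamma$-positivity by feeding the recursion into an induction: shellability should allow one to expand each contribution $H_{[\zero,z]}\,\overline{\chi}_{[z,\one]}$ in the $\gamma$-basis $\{t^i(1+t)^{n-1-2i}\}$ with non-negative coefficients, and $\gamma$-positivity then implies unimodality by the implications recorded in the preliminaries. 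For a general, possibly non-Cohen--Macaulay poset the $\gamma$-vector can fail to be non-negative, so unimodality must be extracted directly from palindromicity together with a monotonicity estimate for the difference sequence read off from the same recursion; establishing that monotonicity without shellability is the second, more delicate, obstacle. The degenerate checks $H_P = 1$ for $n = 1$, $H_P = 1 + t$ for $n = 2$, and $H_{\text{chain}} = (1+t)^{n-1}$ serve as consistency tests throughout.
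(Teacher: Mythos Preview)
The paper does not give its own proof of this theorem: it is quoted as a summary of results established in \cite{ferroni-matherne-vecchi}, so there is no in-paper argument to compare against directly.

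On your proposal itself: the verification of non-negativity, degree, and monicity from the chain formula is correct and complete, and your reformulation of palindromicity as the vanishing of $\sum_{x}(t^{\,n-1-\rho(x)}-1)F_x(t)$ is valid. But for palindromicity, unimodality, and $\gamma$-positivity you have written an outline, not a proof. You explicitly defer palindromicity to ``Kazhdan--Lusztig--Stanley reciprocity'' and unimodality to a ``monotonicity estimate'' whose derivation you call ``the second, more delicate, obstacle''; the recursion $H_P=\sum_{z<\one}H_{[\zero,z]}\,\overline{\chi}_{[z,\one]}$ you plan to induct along has factors $\overline{\chi}$ with signs coming from the M\"obius function, so neither unimodality nor $\gamma$-positivity pass through it termwise, and you have not supplied the mechanism that controls the cancellation. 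As written, the hard half of the theorem is identified rather than proved.

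It is worth noting that the present paper's own machinery yields a cleaner route to palindromicity and unimodality than the incidence-algebra induction you sketch. By Proposition~\ref{prop:FYhvector} one has $h_k=|\FY^k|$, and Theorem~\ref{thm:description SFY} constructs a symmetric chain decomposition of the graded poset $\FY$ of rank $n-1$. Any symmetric chain decomposition forces the rank-size sequence to be palindromic (each chain meets ranks $r$ and $n-1-r$ in equal number) and unimodal (for $r<(n-1)/2$ every chain through rank $r$ continues to rank $r+1$). Thus two of the three ``hard'' properties fall out of the paper's main construction with no appeal to $\overline{\chi}$ or M\"obius functions; only $\gamma$-positivity in the Cohen--Macaulay case remains a genuine import from \cite{ferroni-matherne-vecchi}.
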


Chow polynomials are actually conjectured to be real-rooted for every Cohen--Macaulay poset \cite[Conjecture~1.5]{ferroni-matherne-vecchi}. The conjecture has recently gained a lot of attention and was proven for different classes of posets in \cite{uniformrealrooted,hoster-stump,TNrealrooted,coron-li-ferroni}.

In \cite{FY}, Feichtner and Yuzvinsky defined a toric variety associated to any atomistic lattice and studied its \emph{Chow ring}, giving an explicit presentation and a Gröbner basis for its ideal of relations. Later, Adiprasito, Huh and Katz showed that when $P$ is a geometric lattice the corresponding \emph{Chow ring} is an Artinian Gorenstein algebra with the Kähler package \cite{chowring}. These properties are easily seen to not hold for the Feichtner--Yuzvinsky Chow ring of generic atomistic lattices.

By comparing the Feichtner--Yuzvinsky Gröbner basis with Definition \ref{def:chow polynomial}, it is clear that when $P$ is a geometric lattice its Chow polynomial coincides with the Hilbert--Poincaré series of its Chow ring, see also \cite[Theorem 1.5]{ferroni-matherne-stevens-vecchi}. This, amongst other indicative numerical properties of the Chow polynomial led Ferroni, Matherne and the second author to conjecture in \cite[Section 4.6]{ferroni-matherne-vecchi} the existence of a combinatorial ring generalizing the Chow ring for arbitrary weakly ranked posets exhibiting various strong properties, including Poincaré duality and the Strong Lefschetz property.

\subsection{Monomial order ideals and (pure) \texorpdfstring{$O$}{O}-sequences} 
    A finite set of monomials $M$ is called a \emph{monomial order ideal} if any monomial that divides a monomial in $M$ is also contained in $M$. A monomial order ideal $M$ is \emph{pure} of degree $d$ if any maximal monomial in $M$ has degree $d$. For a monomial order ideal $M$, let $h_d$ denote the number of monomials of degree $d$. We refer to the vector $(h_0,h_1,h_2,\ldots,h_e)$ as the \emph{$h$-vector} of $M$. Note that this is exactly the sequence of Whitney numbers of the first kind of $M$ as a poset ordered by degree.
    \begin{definition}\label{def:O}
        We say that a sequence of nonnegative integers $$h=(h_0,h_1,h_2,\ldots,h_e)$$ is a \textit{(pure) $O$-sequence} if it is the $h$-vector of a (pure) monomial order ideal.
    \end{definition}
    We refer to \cite{Migliore2013} for a more complete introduction to the concept.
    \begin{definition}
        Let $P$ be a poset of rank $n$ and let $H_P(t) = \sum_{i=0}^{n-1}h_it^i$ be its Chow polynomial. We call the sequence
        \[
        h = h(P) = (h_0, h_1, \ldots, h_{n-1})
        \]
        the \emph{$\FY$ $h$-vector of $P$}.
    \end{definition}
    We justify this name by constructing a monomial order ideal with $h(P)$ as its $h$-vector.    
    Consider the polynomial ring $\bQ[x_p \mid p \in P \setminus \zero]$ and all monomials $m$ of the form
    \[
        m = x_{p_1}^{\ell_1}x_{p_2}^{\ell_2}\cdots x_{p_s}^{\ell_s},
    \]
    where ${\zero= p_0 < p_1 < \cdots < p_s \leq \one}$ is a chain of $P$, $1\leq \ell_i \leq d_i-1$ and $d_i = \rho(p_i) - \rho(p_{i-1})$.
    We say that a monomial of this form is a \emph{Feichtner--Yuzvinsky monomial} (or FY monomial for short). We denote by $\FY$ the family of all FY monomials and by $\FY^k$ the family of FY monomials of degree $k$. Clearly, $\FY$ is a, not necessarily pure, monomial order ideal. 
    The following is a direct consequence of the definition of $H$ (the argument in \cite[Corollary~1 and p. 526]{FY} works in general for posets).
    \begin{proposition}\label{prop:FYhvector}
        For any poset $P$,
        \[
            h_k = \left | \FY^k \right|.
        \]
    \end{proposition}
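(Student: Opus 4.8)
The plan is to prove the identity by comparing generating functions: I would show that $\sum_{k \ge 0} |\FY^k|\, t^k$ is literally the right-hand side of Definition~\ref{def:chow polynomial}, and then read off the coefficient of $t^k$.

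First I would organize the family $\FY$ according to the support of each monomial. A monomial $m = x_{p_1}^{\ell_1}\cdots x_{p_s}^{\ell_s}$ that is an FY monomial determines its support $\{p_1, \dots, p_s\}$, which by definition is a chain; adjoining $\zero$ recovers a chain $\zero = p_0 < p_1 < \cdots < p_s \le \one$, and since a chain is determined by its underlying set, this chain is unique. Conversely, such a chain together with an exponent vector $(\ell_1, \dots, \ell_s)$ satisfying $1 \le \ell_i \le d_i - 1$, where $d_i = \rho(p_i) - \rho(p_{i-1})$, recovers $m$. Hence $\FY$ is the disjoint union, over all chains $\zero = p_0 < \cdots < p_s \le \one$ (the empty chain $s = 0$ included, contributing the monomial $1$), of the monomials supported on that chain with admissible exponents.

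Next, for a fixed chain the contribution to $\sum_k |\FY^k|\, t^k$ is
\[
\sum_{\ell_1 = 1}^{d_1 - 1} \cdots \sum_{\ell_s = 1}^{d_s - 1} t^{\ell_1 + \cdots + \ell_s}
= \prod_{i=1}^{s}\Bigl(t + t^2 + \cdots + t^{d_i - 1}\Bigr)
= \prod_{i=1}^{s}\frac{t\bigl(t^{d_i - 1} - 1\bigr)}{t - 1},
\]
where for $d_i = 1$ the inner sum is empty, equal to $0$, in agreement with the vanishing of the corresponding factor on the right. Substituting $d_i = \rho(p_i) - \rho(p_{i-1})$ and summing over all chains gives exactly the expression for $H_P(t)$ in Definition~\ref{def:chow polynomial}, so comparing coefficients of $t^k$ yields $h_k = |\FY^k|$.

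There is no real obstacle here; the statement is a bookkeeping exercise, and the only points needing a word of care are that the decomposition of $\FY$ by support is genuinely disjoint (a monomial has a unique support, and a chain is determined by its underlying set) and the degenerate case $d_i = 1$, where a chain with a consecutive rank jump equal to $1$ contributes no FY monomials, consistently with the factor $t(t^{d_i-1}-1)/(t-1)$ being zero. This is precisely why the argument of Feichtner and Yuzvinsky in \cite[Corollary~1 and p.~526]{FY}, which is purely combinatorial, carries over verbatim from geometric lattices to arbitrary weakly ranked posets.
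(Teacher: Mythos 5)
Your proof is correct and is exactly the direct computation the paper has in mind: it states the proposition as "a direct consequence of the definition" via the Feichtner--Yuzvinsky argument, and your grouping of $\FY$ by support chains with the geometric-series factor $\prod_i t(t^{d_i-1}-1)/(t-1)$ is precisely that bookkeeping, including the correct handling of the degenerate case $d_i=1$.
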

    
    The following result highlights the importance that $O$-sequences have in commutative algebra.

    \begin{theorem}[{\cite[Theorem~2.2]{Osequence1}}]
        The sequence $(1,h_1,h_2,\ldots,h_e)$ is an $O$-sequence if and only if it is the Hilbert--Poincaré series of a standard graded algebra.
    \end{theorem}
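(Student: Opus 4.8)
The plan is to establish the two implications separately: the elementary correspondence between monomial order ideals and Artinian monomial quotients handles one direction, and Macaulay's theorem on initial ideals handles the other. Throughout, write $h = (1, h_1, \ldots, h_e)$ for the sequence under consideration.

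First I would treat the implication that an $O$-sequence is realized as a Hilbert--Poincaré series, which is the elementary direction. Let $M$ be a monomial order ideal with $h$-vector $h$. Since $M$ is closed under taking divisors, the only variables dividing some element of $M$ are the $h_1$ degree-one monomials belonging to $M$, so we may regard $M \subseteq K[x_1, \ldots, x_{h_1}]$. Let $I$ be the monomial ideal generated by all monomials of $K[x_1, \ldots, x_{h_1}]$ that do not lie in $M$. A short check, using that $M$ is an order ideal, shows that $M$ is exactly the set of monomials avoiding $I$: a monomial not in $M$ is itself a generator of $I$, and a monomial in $M$ cannot be a multiple of a generator of $I$ since that generator would then lie in $M$. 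Hence $M$ is a $K$-basis of $A \coloneq K[x_1, \ldots, x_{h_1}]/I$. This $A$ is standard graded (a quotient of a polynomial ring, with $A_0 = K$ because $1 \notin I$), it is finite-dimensional because $M$ is finite, and $\dim_K A_k = |\{ m \in M : \deg m = k \}| = h_k$, so its Hilbert--Poincaré series is $h$.

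For the converse, suppose $h$ is the Hilbert--Poincaré series of a standard graded algebra $A = K[x_1, \ldots, x_d]/I$; since the series terminates, $A$ is Artinian. Fix a term order on $K[x_1, \ldots, x_d]$ that refines degree (for instance, degree reverse lexicographic) and let $\operatorname{in}(I)$ denote the associated initial ideal. By Gröbner basis theory the standard monomials --- those not lying in $\operatorname{in}(I)$ --- form a $K$-basis of $K[x_1, \ldots, x_d]/I$ compatible with the grading, so there are finitely many of them and exactly $h_k$ in each degree $k$. The set $M$ of all standard monomials is the complement of a monomial ideal, hence a finite monomial order ideal, and its $h$-vector is $h$. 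Therefore $h$ is an $O$-sequence.

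The one substantial ingredient is the classical theorem of Macaulay that a homogeneous ideal and its initial ideal share the same Hilbert function --- equivalently, that the standard monomials of $I$ span $K[x_1, \ldots, x_d]/I$ --- and this is where I expect the genuine content to sit; I would invoke it as a known fact from Gröbner basis theory rather than reprove it. Everything else is bookkeeping: passing between $M$ and $I$, counting the variables that occur, and observing that complements of monomial ideals are precisely the (possibly non-pure) monomial order ideals. The only point requiring care is choosing the term order so that it refines the grading, which ensures that $\operatorname{in}(I)$ is homogeneous and that the resulting order ideal reproduces $h$ degree by degree.
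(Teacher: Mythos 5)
The paper does not prove this statement; it is quoted verbatim as a known theorem of Stanley with a citation, so there is no internal proof to compare against. Your argument is correct and is essentially the standard proof of that cited result: the easy direction realizes a monomial order ideal as the monomial basis of an Artinian monomial quotient, and the converse passes to an initial ideal with respect to a degree-compatible term order and invokes Macaulay's theorem that $I$ and $\operatorname{in}(I)$ have the same Hilbert function. Both implications are sound as written, and the points you flag as requiring care (finiteness of $M$, homogeneity of $I$, the term order refining degree) are exactly the right ones.
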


    We have a complete characterization of $O$-sequences. For any pair of nonnegative integers $n$ and $d$ there exists a unique sequence of numbers $k_d>k_{d-1}>\ldots >k_\delta\geq \delta\geq 1 $ such that $$n=\binom{k_d}{d}+\binom{k_{d-1}}{d-1}+\ldots+\binom{k_\delta}{\delta}.$$ This is referred to as the \textit{$d$-binomial expansion of $n$}
    
    \begin{theorem}[{\cite{MacaulaySomePO}}]\label{thm:Odescription}
        A sequence $(1,h_1,h_2,\ldots,h_e)$ is an $O$-sequence if and only if for every $1\leq i < e$
        $$h_i=\binom{k_d}{d}+\binom{k_{d-1}}{d-1}+\ldots+\binom{k_\delta}{\delta}$$ and
        $$h_{i+1}\leq \binom{k_d+1}{d+1}+\binom{k_{d-1}+1}{d}+\ldots+\binom{k_\delta+1}{\delta+1}.$$
    \end{theorem}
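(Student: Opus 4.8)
The plan is to deduce the theorem from Macaulay's classical characterization of Hilbert functions of standard graded algebras, whose combinatorial core is the extremality of lexicographic segments. First I would record the dictionary between the two languages: a monomial order ideal $M \subseteq S := K[x_1,\dots,x_d]$ determines the monomial ideal $I$ spanned by the monomials \emph{not} in $M$ — downward-closedness of $M$ is exactly what forces $S_1\cdot I \subseteq I$ — and the monomials lying outside $I$ are precisely those in $M$, so $h_j = \dim_K(S/I)_j$ for all $j$. Conversely every quotient of a polynomial ring by a monomial ideal arises this way, and, by passing to an initial ideal with respect to any term order (which preserves the Hilbert function), so does every standard graded algebra; combined with the cited equivalence \cite{Osequence1} between $O$-sequences and Hilbert series of standard graded algebras, it therefore suffices to prove the stated inequality for quotients by monomial ideals, with $d$ taken as large as needed since the bound does not involve $d$.

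For the necessity direction I would write $h_i^{\langle i\rangle} := \binom{k_d+1}{d+1} + \binom{k_{d-1}+1}{d} + \cdots + \binom{k_\delta+1}{\delta+1}$ for the right-hand side of the asserted inequality. The easy half is structural: since $I$ is an ideal, any degree-$(i+1)$ monomial outside $I$ has all of its degree-$i$ divisors outside $I$, so $\dim_K(S/I)_{i+1}$ is at most the number of degree-$(i+1)$ monomials not lying in $I_i\cdot S_1$, i.e. $\binom{d-1+i+1}{i+1} - |I_i\cdot S_1|$, with equality for the lex ideal generated in degrees $\le i$. The hard half is \textbf{Macaulay's shadow lemma}: among all sets of degree-$i$ monomials of a fixed size, the lexicographically largest one has the smallest shadow $N\cdot S_1$. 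The standard proof is a compression argument — one variable at a time, replace each graded piece of $I$ by its compression toward a lex segment in that variable, check that $|N\cdot S_1|$ never increases while $|N|$ is preserved, and iterate until $I$ is a lex ideal. Given the lemma, $h_{i+1}\le h_i^{\langle i\rangle}$ reduces to one explicit count for the lex ideal whose complement has exactly $h_i$ monomials in degree $i$, namely that the degree-$(i+1)$ complement then has size precisely $h_i^{\langle i\rangle}$; this is an elementary (if fiddly) identity about counting lexicographic intervals of monomials, verified directly from the defining property of the $i$-binomial expansion, and the resulting value is independent of $d$.

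For the sufficiency direction, given any sequence $(1,h_1,\dots,h_e)$ satisfying $h_{i+1}\le h_i^{\langle i\rangle}$ for all $1\le i<e$, I would realize it by the complementary greedy construction: in a polynomial ring with sufficiently many variables, let $M$ consist, in each degree $j$, of the $h_j$ lexicographically smallest monomials of degree $j$ — equivalently $M$ is the monomial basis of $S/L$ for the lex ideal $L$ with these dimensions. The hypothesis, together with the monotonicity of the operator $n\mapsto n^{\langle j\rangle}$ on binomial expansions, guarantees that every selected degree-$(j+1)$ monomial has all of its degree-$j$ divisors already selected, so $M$ is downward closed, i.e. a monomial order ideal, and by construction its $h$-vector is $(1,h_1,\dots,h_e)$. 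Together with the first paragraph this establishes both implications; the genuine obstacle throughout is Macaulay's shadow lemma and the compression induction behind it, everything else being a routine reduction or a binomial-coefficient computation, and I would present the compression argument in full or else cite the standard treatment in \cite{Migliore2013}.
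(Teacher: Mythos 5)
The paper offers no proof of this statement: it is Macaulay's classical characterization of $O$-sequences, cited directly from \cite{MacaulaySomePO}, so there is no internal argument to compare yours against. Your outline is the standard proof and is essentially sound: the complement dictionary between monomial order ideals and monomial ideals (so that $h_j=\dim_K(S/I)_j$), the reduction of the necessity direction to the extremality of lex segments (among sets of degree-$i$ monomials of fixed cardinality, the lex segment has minimal shadow), the explicit count showing that the complement of the lex ideal in degree $i+1$ has exactly $h_i^{\langle i\rangle}$ elements, and the greedy lex construction for sufficiency are precisely the ingredients of the textbook treatment. Two caveats. First, all of the mathematical content is concentrated in the shadow lemma, which you only gesture at via compression; as written the proposal is a correct roadmap rather than a complete proof, which is defensible for a result the paper itself takes on citation, but the compression induction is where every difficulty lives and it is not routine. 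Second, your opening reduction through standard graded algebras and initial ideals is unnecessary and mildly circular: the statement to be proved concerns monomial order ideals by definition, so the complement dictionary alone already places you in the setting of monomial ideals, and the cited equivalence \cite{Osequence1} between $O$-sequences and Hilbert functions of standard graded algebras is itself ordinarily deduced from Macaulay's theorem together with Gr\"obner degeneration, not the other way around; invoking it here adds nothing and risks assuming what you are trying to prove.
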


    While this statement gives a complete description of $O$-sequences, the same has not yet been achieved for pure $O$-sequences.
    Nonetheless, pure $O$-sequences of length $3$ can be characterized as follows.
    \begin{proposition}\label{prop:pure3seq}
        A sequence $(1,h_1,h_2)$ is a pure $O$-sequence if and only if $$\left\lfloor \frac{h_1+1}{2} \right\rfloor\leq  h_2\leq \binom{h_1+1}{2}.$$
    \end{proposition}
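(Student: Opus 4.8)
The plan is to prove the two displayed inequalities as necessity and then, conversely, to construct a pure monomial order ideal realizing any sequence satisfying them. The only substantive point is the lower bound $h_2\ge\lfloor(h_1+1)/2\rfloor$, which is where purity is genuinely used; I expect the rest to be routine bookkeeping.

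\textbf{Necessity.} The upper bound $h_2\le\binom{h_1+1}{2}$ is just Macaulay's bound and holds for every $O$-sequence, hence a fortiori for pure ones: the $1$-binomial expansion of $h_1$ is $h_1=\binom{h_1}{1}$, so Theorem~\ref{thm:Odescription} gives $h_2\le\binom{h_1+1}{2}$. For the lower bound I would argue as follows. Let $M$ be a pure monomial order ideal with $h$-vector $(1,h_1,h_2)$; we may assume $h_1\ge 1$, since otherwise $0\le h_2$ is vacuous, and then $h_2\ge 1$ forces $M$ to be pure of degree $2$. Consequently none of the $h_1$ degree-$1$ monomials of $M$ (the variables actually occurring) can be maximal, so each of them divides some degree-$2$ monomial of $M$. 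Since a degree-$2$ monomial is divisible by at most two distinct variables, counting incidences between variables and degree-$2$ monomials yields $h_1\le 2h_2$, i.e. $h_2\ge\lceil h_1/2\rceil=\lfloor(h_1+1)/2\rfloor$.

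\textbf{Sufficiency.} Now assume $\lfloor(h_1+1)/2\rfloor\le h_2\le\binom{h_1+1}{2}$. The case $h_1=0$ (which forces $h_2=0$) is trivial, so take $h_1\ge 1$ and work in $\bQ[x_1,\dots,x_{h_1}]$. First I would fix a family $C$ of degree-$2$ monomials in which every variable occurs, namely $C=\{x_1x_2,\,x_3x_4,\,\dots\}$ with $x_{h_1}^2$ adjoined when $h_1$ is odd; then $|C|=\lceil h_1/2\rceil=\lfloor(h_1+1)/2\rfloor\le h_2$. Since the total number of degree-$2$ monomials in $x_1,\dots,x_{h_1}$ is $\binom{h_1}{2}+h_1=\binom{h_1+1}{2}\ge h_2$, I can enlarge $C$ to a family $D$ of exactly $h_2$ degree-$2$ monomials and set $M=\{1\}\cup\{x_1,\dots,x_{h_1}\}\cup D$. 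This $M$ is a monomial order ideal because the only proper divisors of a degree-$2$ monomial are $1$ and single variables, all of which lie in $M$; it is pure of degree $2$ because there are no monomials of degree $\ge 3$ (so every monomial of $D$ is maximal), every variable divides a monomial of $C\subseteq D$ (so no variable is maximal), and $1$ is not maximal; and its $h$-vector is $(1,h_1,h_2)$ by construction.

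The main obstacle, such as it is, is the necessity argument: it is the only place the hypothesis of purity is invoked, via the incidence count $h_1\le 2h_2$. In the sufficiency direction I anticipate no real difficulty beyond the parity case split in the definition of $C$ and the observation that enlarging $C$ to $D$ cannot spoil purity — once a variable divides a monomial of $C$ it remains non-maximal no matter which further degree-$2$ monomials are adjoined.
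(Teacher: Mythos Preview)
Your proof is correct and follows essentially the same approach as the paper: Macaulay's bound for the upper inequality, the incidence count ``each degree-two monomial covers at most two variables'' for the lower inequality, and the explicit cover $\{x_1x_2,x_3x_4,\ldots\}$ (with $x_{h_1}^2$ when $h_1$ is odd) enlarged arbitrarily for sufficiency. If anything, your write-up is slightly more careful about edge cases and about verifying that the enlarged family really is a pure order ideal.
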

    \begin{proof}
    The second inequality is equivalent to the inequality in Theorem \ref{thm:Odescription} that characterizes $O$-sequences, and hence is a necessary condition.
    If $M$ is a pure monomial order ideal with $h$-vector $(1,h_1,h_2)$, every linear monomial must be a divisor of a degree two monomial. As every degree two monomial is divisible by at most two different linear monomial, the first inequality is necessary. 

    Conversely, for any vector $(1,h_1,h_2)$ satisfying these properties we may take $h_1$ different variables $x_1,x_2,\ldots, x_{h_1}$ and choose $h_2$ degree two monomials to get a pure monomial order ideal. We start by choosing $\left\{x_{2i-1}x_{2i}\right\}_{i=1}^{\left\lfloor h_1/2 \right\rfloor}$ and then we add to the set the monomial $x_{h_1}^2$ if $h_1$ is odd. This subset has size exactly $\left\lfloor (h_1+1)/2\right\rfloor$ elements, and covers each of the $h_1$ degree one monomials. Thus, any choice for the remaining $h_2-\left\lfloor(h_1+1)/2\right\rfloor$ monomials gives a pure monomial order ideal. \end{proof}

    While we may not have a complete characterization for pure $O$-sequences, we still have strong inequalities holding for them.
    \begin{theorem}[\cite{Hibi1989WhatCB}]
        For a pure $O$-sequence $(1,h_1,h_2,\ldots,h_e)$ we have that for any $i\leq j\leq e-i$ $$h_i\leq h_j.$$
    \end{theorem}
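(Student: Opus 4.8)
The plan is to realize the sequence by an explicit pure monomial order ideal, break that order ideal into pieces each of which lives inside a single product of chains, and run a symmetric chain decomposition argument on every piece.

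So let $M$ be a pure monomial order ideal whose $h$-vector is $(1,h_1,\dots,h_e)$; thus $M$ is pure of degree $e$, i.e.\ all of its maximal monomials $\mu_1,\dots,\mu_r$ have degree $e$. Since $M$ is a finite order ideal, every one of its monomials divides some $\mu_k$, so $M=\bigcup_{k=1}^{r}D(\mu_k)$, where $D(\nu)$ denotes the set of divisors of the monomial $\nu$. Writing $\mu_k=\prod_i x_i^{a_i}$ with $\sum_i a_i=e$, the poset $D(\mu_k)$ ordered by divisibility is the product of chains $\prod_i\{0,1,\dots,a_i\}$, graded by degree, with top degree $e$. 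After fixing the listing $\mu_1,\dots,\mu_r$, I would set $U_k=D(\mu_k)\setminus\bigcup_{l<k}D(\mu_l)$, so that $M$ is the disjoint union of the $U_k$ and therefore $h_m=\sum_{k=1}^{r}\#\{u\in U_k:\deg u=m\}$ for every degree $m$.

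Two points then have to be checked. First, each $U_k$ is a filter (up-set) of the poset $D(\mu_k)$: if $u\in U_k$ and $u\mid w\mid\mu_k$, then $w\mid\mu_l$ for some $l<k$ would force $u\mid\mu_l$, contradicting $u\notin D(\mu_l)$, so $w\in U_k$. Second, a filter of a product of chains graded by degree with top degree $e$ has a ``flawless'' rank sequence in the required range. For this I would invoke the de Bruijn--Tengbergen--Kruyswijk symmetric chain decomposition (recalled in Section~\ref{sec:symmetric chain decomp}): $D(\mu_k)$ splits into saturated chains, each running from its bottom rank $s_C$ up to rank $e-s_C$. A filter meets such a chain $C$ in a top segment, namely in the elements of $C$ of rank at least some threshold $t_C$; hence $U_k$ contributes an element of degree $m$ on $C$ exactly when $\max(s_C,t_C)\le m\le e-s_C$. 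If this happens for $m=i$ and $i\le j\le e-i$, then $j\ge i\ge\max(s_C,t_C)$ and, using $i\ge s_C$, also $j\le e-i\le e-s_C$, so it happens for $m=j$ as well. Summing over the chains of the decomposition gives $\#\{u\in U_k:\deg u=i\}\le\#\{u\in U_k:\deg u=j\}$, and summing over $k$ yields $h_i\le h_j$.

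I expect the only genuinely non-routine ingredient to be the symmetric chain decomposition of products of chains; everything else is elementary bookkeeping. The step needing the most care is aligning the symmetry of the chains (about $e/2$) with the hypothesis range $i\le j\le e-i$, together with making sure the $U_k$ honestly partition $M$ so that the per-piece inequalities add up. It is precisely the filter property of the $U_k$ that lets the symmetric chain argument go through piece by piece, even though $M$ itself generally admits no symmetric chain decomposition --- which it cannot, since pure $O$-sequences need not be unimodal.
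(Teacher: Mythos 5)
Your proof is correct, but note that the paper does not actually prove this statement --- it is imported verbatim from Hibi's paper \cite{Hibi1989WhatCB} --- so what you have produced is a self-contained combinatorial proof of a cited result rather than a variant of an in-text argument. Both of your key steps check out: each $U_k=D(\mu_k)\setminus\bigcup_{l<k}D(\mu_l)$ really is an up-set of the product of chains $D(\mu_k)$ (if some multiple $w$ of $u\in U_k$ divided an earlier $\mu_l$, then $u$ would too), and an up-set meets every chain of a symmetric chain decomposition in a top segment, so a chain $C$ running through ranks $s_C,\dots,e-s_C$ that contributes an element in degree $i$ also contributes one in every degree $j$ with $i\le j\le e-i$, since $j\ge i\ge t_C$ and $j\le e-i\le e-s_C$ (the latter because $i\ge t_C\ge s_C$); summing over chains and over $k$ gives $h_i\le h_j$. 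The only ingredient you need beyond bookkeeping is the symmetric chain decomposition of a product of chains, which is exactly Corollary \ref{cor:scd for product of chains} of this paper, so your argument dovetails nicely with the machinery already developed in Section \ref{sec:symmetric chain decomp}; it is also genuinely pure-combinatorial, whereas the standard references obtain flawlessness of pure $O$-sequences by algebraic means. One small point to make explicit: when $U_k$ misses a chain entirely, take the threshold $t_C$ larger than $e-s_C$ so the contribution condition is vacuous; with that convention the per-chain implication, and hence the whole count, is airtight.
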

    \begin{corollary}
    For a pure $O$-sequence
    \begin{itemize}
        \item $h_i\leq h_{e-i}$ for any $i\leq e/2$,
        \item $h_0\leq h_1\leq \cdots \leq h_{\lfloor e/2\rfloor}$.
    \end{itemize}
    \end{corollary}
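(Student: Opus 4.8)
The plan is to deduce both bullet points as immediate consequences of the preceding theorem of Hibi, which asserts that for a pure $O$-sequence $(1,h_1,\ldots,h_e)$ one has $h_i \le h_j$ whenever $i \le j \le e-i$. Throughout we may assume $e\ge 1$, since for $e=0$ both claims are vacuous (and recall $h_0=1$).

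For the first bullet, fix $i$ with $i \le e/2$ and apply the theorem with $j = e-i$. The hypothesis $i \le j \le e-i$ reads $i \le e-i \le e-i$; the right inequality is an equality and the left one is precisely $i \le e/2$. Hence $h_i \le h_{e-i}$, as desired.

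For the second bullet, it suffices to prove $h_k \le h_{k+1}$ for every $k$ with $0 \le k$ and $k+1 \le \lfloor e/2\rfloor$, since chaining these inequalities yields $h_0 \le h_1 \le \cdots \le h_{\lfloor e/2\rfloor}$. Apply the theorem with $i = k$ and $j = k+1$; we must check that $k+1 \le e-k$. From $k+1 \le \lfloor e/2\rfloor$ we get $2(k+1) \le 2\lfloor e/2\rfloor \le e$, so $k+1 \le e-(k+1) \le e-k$, and also trivially $k \le k+1$. Thus the theorem gives $h_k \le h_{k+1}$.

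I do not expect any genuine obstacle here: the statement is a direct corollary, and the only point requiring a moment's care is the floor arithmetic ensuring the index $j=k+1$ still satisfies $j \le e-i$ in the last step of the chain, which the computation $2(k+1)\le 2\lfloor e/2\rfloor \le e$ settles uniformly for both parities of $e$.
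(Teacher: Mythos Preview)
Your proposal is correct and is exactly the derivation the paper has in mind: the corollary is stated immediately after Hibi's theorem without proof, and both bullets are obtained precisely by the specializations $j=e-i$ and $j=i+1$ that you carry out. Your handling of the floor arithmetic for the second bullet is clean and nothing further is needed.
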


\subsection{\texorpdfstring{$\SI$}{SI}-sequences}    
    For a palindromic and unimodal sequence $h=(1,h_1,\ldots,h_e)$ we define its \textit{differential sequence} to be $$\Delta h=(1,h_1-1,h_2-h_1,\ldots ,h_{\lfloor e/2\rfloor}-h_{\lfloor e/2\rfloor-1}).$$

    Sommerville proved in \cite{DehnSommerville} that the \emph{$h$-vector} of a simplicial polytope $P$ is unimodal and palindromic. McMullen \cite{g-conjecture} then conjectured that a vector is the $h$-vector of a simplicial polytope if and only if $\Delta h$, also known as the \emph{$g$-vector}, is an $O$-sequence. This is commonly known as the \emph{$g$-conjecture} for polytopes.

    Later, McMullen defined in \cite{gThmNecessity2} the \emph{polytope algebra} $\Pi_P$ for any simplicial polytope $P$. This is a standard graded algebra with the Kähler package whose Hilbert series is equal to the $h$-vector of $P$. When taking a quotient of $\Pi_P$ by an ideal generated by any Lefschetz element, one obtains a standard graded algebra whose Hilbert--Poincaré series coincides with the $g$-vector, thus giving a proof for the necessity of the statement. 

    \begin{definition}\label{def:SI}
        A \emph{Stanley-Iarrobino sequence} (or \emph{$\SI$-sequence} for short) is a nonnegative, palindromic sequence $h$ that is a \emph{differentially $O$-sequence}, i.e., $\Delta h$ is an $O$-sequence.
    \end{definition}
    
    \begin{theorem}[\cite{gThmSufficiency,gThmNecessity, gThmNecessity2,Lefschetz}]\label{thm:characterization SI sequence}
        For a sequence $$h=(1,h_1,\ldots,h_e)$$ the following statements are equivalent:
        \begin{itemize}
            \item $h$ is an $SI$-sequence,
            \item $h$ is the $h$-vector of a simplicial polytope,
            \item $h$ is the Hilbert--Poincaré series of a Gorenstein algebra with the Weak Lefschetz property,
            \item $h$ is the Hilbert--Poincaré series of a Gorenstein algebra with the Strong Lefschetz property,
            \item $h$ is the Hilbert--Poincaré series of a Gorenstein algebra with the Kähler package.
        \end{itemize}
    \end{theorem}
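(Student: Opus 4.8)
The plan is to prove the equivalence by establishing the single cyclic chain of implications
\[
(2)\implies(5)\implies(4)\implies(3)\implies(1)\implies(2),
\]
where I number the bullet points in the order they appear, so that $(1)$ is the $\SI$-sequence condition, $(2)$ the simplicial polytope condition, $(3)$ the Weak Lefschetz condition, $(4)$ the Strong Lefschetz condition, and $(5)$ the Kähler package condition. Three of the five links are immediate from the definitions collected in this section: $(5)\implies(4)$ because the Kähler package contains the Strong Lefschetz property, and $(4)\implies(3)$ because the Strong Lefschetz property implies the Weak Lefschetz property, both as already observed above. For $(2)\implies(5)$, I invoke McMullen's polytope algebra: to a simplicial polytope $P$ one associates the algebra $\Pi_P$, an Artinian Gorenstein $\bR$-algebra carrying the full Kähler package whose Hilbert--Poincaré series equals the $h$-vector of $P$ \cite{gThmNecessity2,Lefschetz}, which is precisely statement $(5)$.

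The substantive algebraic step is $(3)\implies(1)$. Let $A=\bigoplus_{i=0}^{e}A_i$ be an Artinian Gorenstein algebra with Hilbert--Poincaré series $h$ and weak Lefschetz element $\ell\in A_1$. Since $A$ is Gorenstein it has Poincaré duality, so $h$ is palindromic, and $h$ is nonnegative as it records dimensions; it thus remains to prove that the differential sequence $\Delta h$ is an $O$-sequence. Consider the standard graded Artinian quotient $A/\ell A$. In each degree $j$ one has
\[
\dim_K(A/\ell A)_j=\dim_K A_j-\dim_K A_{j-1}+\dim_K\ker(\times\ell\colon A_{j-1}\to A_j),
\]
and the Weak Lefschetz property guarantees that this multiplication map is injective for every $j\leq\lfloor e/2\rfloor$, so in that range the kernel term vanishes and $\dim_K(A/\ell A)_j=h_j-h_{j-1}$. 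Therefore the initial segment of the Hilbert--Poincaré series of $A/\ell A$ coincides with $\Delta h=(1,h_1-h_0,\ldots,h_{\lfloor e/2\rfloor}-h_{\lfloor e/2\rfloor-1})$. By \cite[Theorem~2.2]{Osequence1} the Hilbert--Poincaré series of any standard graded algebra is an $O$-sequence, and by the Macaulay characterization recorded in \cref{thm:Odescription} every initial segment of an $O$-sequence is again an $O$-sequence; hence $\Delta h$ is an $O$-sequence and $h$ is an $\SI$-sequence.

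It remains to close the cycle with $(1)\implies(2)$, which is the sufficiency direction of the $g$-theorem: for a palindromic sequence $h$ whose differential sequence $\Delta h$ is an $O$-sequence, the Billera--Lee construction realizes $h$ as the $h$-vector of a simplicial polytope \cite{gThmSufficiency}. Assembling the five links gives the asserted equivalence. The genuine difficulty of the statement lies entirely in the two imported theorems, namely the existence of a polytope algebra with the complete Kähler package used in $(2)\implies(5)$ and the Billera--Lee realization used in $(1)\implies(2)$; these are deep results that we quote rather than reprove, and the role of the present argument is to thread them together with the elementary Lefschetz-quotient computation. The one point demanding care when writing up $(3)\implies(1)$ is the degree bookkeeping: one must check that the injectivity range $j\leq\lfloor e/2\rfloor$ afforded by the Weak Lefschetz property exactly covers all degrees occurring in $\Delta h$, so that the identification of $\Delta h$ with a truncation of the Hilbert function of $A/\ell A$ is valid throughout the first half.
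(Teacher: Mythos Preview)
The paper does not actually supply a proof of this theorem; it is recorded as a known result with citations to \cite{gThmSufficiency,gThmNecessity,gThmNecessity2,Lefschetz}, and the surrounding prose only sketches the idea that McMullen's polytope algebra $\Pi_P$ has the Kähler package and that quotienting by a Lefschetz element recovers the $g$-vector as a Hilbert series. Your cyclic argument $(2)\Rightarrow(5)\Rightarrow(4)\Rightarrow(3)\Rightarrow(1)\Rightarrow(2)$ is correct and is precisely the standard way these cited results are assembled; in particular, your $(3)\Rightarrow(1)$ step via the quotient $A/\ell A$ makes explicit the computation the paper alludes to, and your degree bookkeeping is sound since $j\le\lfloor e/2\rfloor$ indeed forces $j-1\le\lfloor(e-1)/2\rfloor$, placing $A_{j-1}\to A_j$ in the injective range. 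There is nothing to compare against, and your write-up is an accurate expansion of what the citations encode.
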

    \begin{remark}
        In \cite[Problem 2.9]{MR4780720}, Brenti asks whether the coefficients of every monic, palindromic and real-rooted polynomial form a $\SI$-sequence. Solving the real-rootedness conjecture for Chow polynomials and Brenti's conjecture would provide an independent proof of Theorem \ref{thm:main1} for Cohen--Macaulay posets. We observe however that our result holds for a much larger class of posets.  
    \end{remark}
    Our goal now is to prove that the $\FY$ $h$-vector is a $\SI$-sequence. To do so, in analogy with the polytope case, we define the $\FY$ $g$-vector of $P$ as $g = g(P) \coloneq \Delta h(P)$. By Theorem \ref{thm:properties of characteristic Chow} the only thing we would need to show then is that $g$ is an $O$-sequence.
    
\section{Symmetric chain decompositions}\label{sec:symmetric chain decomp}
The goal of this section is to find a combinatorial interpretation of the $\FY$ $g$-vector of a poset.
A \emph{symmetric chain decomposition} of a poset $P$ of rank $n$ is a partition of $P$ such that each part is a saturated chain, i.e., no element can be added between two consecutive elements without losing the property of being a chain, where if the first element is of rank $r$, the last element is of rank $n-r$. It is known that if $P_1$ and $P_2$ are two posets admitting a symmetric chain decomposition, then so is the Cartesian product $P_1\times P_2$ \cite{chainDecomp}. Let $C_n$ denote the chain poset of rank $n$.
\begin{corollary}\label{cor:scd for product of chains}
    The poset $P = C_1 \times C_2 \times \cdots \times C_s$ admits a symmetric chain decomposition. 
\end{corollary}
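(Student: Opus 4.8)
The plan is to bootstrap from the two-factor statement quoted just above by induction on the number $s$ of factors. For the base case $s=1$, the chain $C_1$ carries a trivial symmetric chain decomposition: the single part consisting of all of $C_1$, a saturated chain whose minimum has rank $0$ and whose maximum has rank $1 = 1 - 0$, so the symmetry condition holds with $n=1$, $r=0$. For the inductive step, assume $P' = C_1 \times \cdots \times C_{s-1}$ admits a symmetric chain decomposition; writing $P = P' \times C_s$ and noting that $C_s$ also admits one (again, the whole chain is a single symmetric part), the result cited before the corollary --- that a Cartesian product of two posets with symmetric chain decompositions again has one \cite{chainDecomp} --- produces a symmetric chain decomposition of $P$.

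The one point worth spelling out explicitly is the compatibility of ranks under products. The rank of $(x,y) \in P_1 \times P_2$ is $\rho_1(x) + \rho_2(y)$, so $P_1 \times P_2$ has rank $n_1 + n_2$ when $P_i$ has rank $n_i$. Consequently, a chain produced by the product construction from a symmetric chain of $P_1$ running from rank $r_1$ to rank $n_1 - r_1$ together with a symmetric chain of $P_2$ running from rank $r_2$ to rank $n_2 - r_2$ runs from rank $r_1 + r_2$ to rank $(n_1 - r_1) + (n_2 - r_2) = (n_1 + n_2) - (r_1 + r_2)$, hence is again symmetric inside $P_1 \times P_2$. So the hypotheses of the cited theorem are met verbatim at each stage of the induction, and no further verification is needed.

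I do not anticipate any real obstacle: the genuine combinatorial content --- the explicit merging of symmetric chains across a product --- lives entirely in \cite{chainDecomp}, and what remains here is only the elementary induction above together with the rank bookkeeping. Were one to ask for a self-contained argument, the substantive step would be to recall (or reprove) the two-factor merging lemma, which is where the difficulty actually sits; but since it is available as a citation, the corollary follows immediately.
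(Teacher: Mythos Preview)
Your proposal is correct and matches the paper's approach: the paper states this as an immediate corollary of the cited product result from \cite{chainDecomp}, without writing out a separate proof, and you have simply made the obvious induction on the number of factors explicit. The additional rank bookkeeping you include is fine but unnecessary, since that verification is already part of the content of \cite{chainDecomp}.
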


For a symmetric chain decomposition $S$ we will refer to the set of minimal elements of each saturated symmetric chain as the \emph{set of initial elements of $S$}, denoted as $S_{\text{init}}$.

\begin{proposition}
For a ranked poset $P$ of rank $n$ with Whitney numbers of the first kind given by the sequence $W = (W_0,W_1,\ldots, W_n)$ and a symmetric chain decomposition $S$, 
$$W(S_\text{init})=\Delta W.$$
\end{proposition}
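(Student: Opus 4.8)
The plan is to count each Whitney number $W_i$ by distributing the rank-$i$ elements of $P$ among the chains of the symmetric chain decomposition $S$, and to observe that whether a given chain meets rank $i$ depends, for $i$ below the midpoint, only on the rank of its initial element. First I would record the basic structure of a symmetric chain: every $\gamma \in S$ is a saturated chain whose minimal element has some rank $a(\gamma)$ and whose maximal element has rank $n - a(\gamma)$; since the chain is increasing, $a(\gamma) \le n - a(\gamma)$, so $a(\gamma) \le \lfloor n/2\rfloor$, and $\gamma$ contains exactly one element of each rank $a(\gamma), a(\gamma)+1, \dots, n-a(\gamma)$. In particular $S_{\text{init}}$ has no element of rank exceeding $\lfloor n/2\rfloor$, so $W(S_{\text{init}})$ has the same length $\lfloor n/2\rfloor + 1$ as $\Delta W$. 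Set $c_j \coloneq W(S_{\text{init}})_j = |\{\gamma \in S : a(\gamma) = j\}|$; note $c_0 = W_0 = 1$ since $P$ is bounded.

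Next, fix $i$ with $0 \le i \le \lfloor n/2\rfloor$. Because $S$ is a partition of $P$, the rank-$i$ elements of $P$ are partitioned according to which chain of $S$ contains them, and a chain $\gamma$ contributes such an element precisely when $a(\gamma) \le i \le n - a(\gamma)$. Since $i \le n/2$, the condition $i \le n - a(\gamma)$ rewrites as $a(\gamma) \le n - i$, which is automatic once $a(\gamma) \le i$ (as $i \le n/2 \le n-i$). Hence $\gamma$ meets rank $i$ if and only if $a(\gamma) \le i$, which gives $W_i = \sum_{j=0}^{i} c_j$ for all $i \le \lfloor n/2\rfloor$.

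Finally, subtracting consecutive instances of this identity yields $c_i = W_i - W_{i-1}$ for $1 \le i \le \lfloor n/2\rfloor$, together with $c_0 = W_0$; this is exactly the statement $W(S_{\text{init}}) = \Delta W$. I would also remark that running the same count for $i > n/2$ gives $W_i = W_{n-i}$, and that the partial sums $\sum_{j\le i} c_j$ are nondecreasing, so $W$ is palindromic and unimodal and $\Delta W$ is indeed well-defined. I do not anticipate a genuine obstacle; the only point needing care is the index bookkeeping and the verification that, for $i \le n/2$, "$\gamma$ meets rank $i$" is truly equivalent to "$a(\gamma) \le i$".
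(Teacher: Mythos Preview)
Your proof is correct and follows essentially the same approach as the paper: both hinge on the observation that, for $i \le \lfloor n/2\rfloor$, a symmetric chain meets rank $i$ if and only if its initial element has rank at most $i$, and then deduce that the number of initial elements at rank $i$ equals $W_i - W_{i-1}$. Your presentation via the cumulative identity $W_i = \sum_{j\le i} c_j$ followed by subtraction is a minor reorganization of the paper's direct rank-by-rank count, but the underlying argument is identical.
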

\begin{proof}
    Every element of rank 0 must be the start of a chain in the decomposition, thus, the number of initial elements with rank $0$ is exactly $W_0$.

    For $1 \leq s \leq \lfloor \frac{n-1}{2} \rfloor$, the number of distinct chains with an element of rank $s-1$ is exactly $W_{s-1}$ (as every such element is contained in exactly one chain).
    By the symmetry of the chains, we know that none of these chains end at $s-1$, as $n-s+1>s-1$, thus, as the chains are saturated each of these chains must contain an element of rank $s$. 
    The remaining $W_s-W_{s-1}$ elements of rank $s$ must be the initial element of a chain, as each chain in $S$ is saturated.
    Lastly, no chain can start with an element larger than $\lfloor \frac{n-1}{2} \rfloor$, thus for any larger $s$ the left-hand side is zero.
\end{proof}

The following observation will help us construct symmetric chain decompositions.
\begin{remark}\label{rem:subdiv-decomp}
    Let $P$ be a ranked poset of rank $n$. If we have a partition $P = \bigsqcup_i P_i$ into ranked subposets, each with a sequence of Whitney numbers of the first kind palindromic with center of symmetry $n/2$, then the union of any symmetric chain decomposition of $P_i$ constitutes a symmetric chain decomposition of $P$. 
\end{remark}

We will now construct a specific symmetric chain decomposition for a product of chains, which exists by Corollary \ref{cor:scd for product of chains}, and give an explicit description of its set of initial elements.

\begin{figure}
        \centering
        \begin{tikzpicture}[scale=0.8]
          \def\r1{6}   
          \def\L{4}    \draw[step=1,gray!30,very thin] (0,0) grid (\r1,\L);
        
          % Axes
          \draw (0,0) -- (\r1,0) node[midway,yshift=-17pt] {$C_{r_1}$};
          \draw (0,0) -- (0,\L) node[midway, xshift=-17pt] {$S'_i$};
        
          % Axis labels
          \foreach \x in {0,...,\r1}
            \node[below] at (\x,0) {\small $\x$};
          \foreach \y in {0,...,\L}
            \node[left] at (0,\y) {\small $\y$};
        
          % Draw chains S_{1,i}
          % S_{1,i} = (i,0) -> (i, L-i) -> (r_1, L-i)
          \foreach \i in {0,...,6} { % upper bound should be min(r1,L)
            \ifnum\i>\L\relax\else
              \draw[line width=1.2pt]
                (\i,0) -- (\i,{\L-\i}) -- (\r1,{\L-\i});
        
              % Mark the vertices in the chain
              \foreach \y in {0,...,\L} {
                  \ifnum\y>\L-\i\relax\else
                    \fill (\i,\y) circle (1.7pt);
                    \ifnum\y=0
                    \fill (\i,\y) circle (2.7pt);
                    
                    \fi
                    
                  \fi
              }
              \foreach \x in {\i,...,\r1} {
                \fill (\x,{\L-\i}) circle (1.7pt);
              }
            \fi
          }
        
        \end{tikzpicture}
        \caption{Symmetric chain decomposition of the product $C_{r_1}\times S'_i$}
        \label{fig:prod-decomp}
    \end{figure}
\begin{theorem}\label{thm:chain-decomp}
    For any poset $P$ of the form $$P=C_{r_1}\times C_{r_2}\times \cdots \times C_{r_n}$$ there exists a symmetric chain decomposition $S$ such that \begin{equation}\label{eq:S init}
    S_{\text{init}}=\big\{(a_1,\ldots,a_n) \ | \ a_s\leq r_s, \; a_s\leq \sum_{i=s+1}^{n} r_i-2a_i \big\}.
    \end{equation}
\end{theorem}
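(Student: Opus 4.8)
The plan is an induction on $n$, peeling off the factor $C_{r_1}$ and running the classical de Bruijn--Tengbergen--Kruyswijk recursion for symmetric chain decompositions of products, but choosing the decomposition of each arising two-chain product so that the first factor always keeps a distinguished ``horizontal'' role; this asymmetry is what forces the recursion to build the set in \eqref{eq:S init} rather than a symmetrized variant of it. The base case $n=1$ is immediate: $C_{r_1}$ is itself a single saturated chain from rank $0$ to rank $r_1$, so $S_{\text{init}}=\{(0)\}$, which coincides with the right-hand side of \eqref{eq:S init} because the sum $\sum_{i=2}^{1}$ is empty and forces $a_1=0$.

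For the inductive step, write $P=C_{r_1}\times P'$ with $P'=C_{r_2}\times\cdots\times C_{r_n}$ of rank $R'=\sum_{i=2}^{n}r_i$, and set $R=r_1+R'$, the rank of $P$. By the inductive hypothesis (applied to $P'$ after relabeling indices) there is a symmetric chain decomposition $S'$ of $P'$ with
\[
S'_{\text{init}}=\{(a_2,\ldots,a_n)\mid a_s\leq r_s,\ a_s\leq \sum_{i=s+1}^{n}r_i-2a_i \text{ for }2\leq s\leq n\}.
\]
Fix $\mathbf{c}\in S'$ with minimal element $(a_2,\ldots,a_n)$, and put $\rho'=a_2+\cdots+a_n$ and $\mathcal{L}=R'-2\rho'\geq 0$. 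Since $\mathbf{c}$ is a saturated symmetric chain of $P'$ it runs through the ranks $\rho',\rho'+1,\ldots,R'-\rho'$, so $\mathbf{c}\cong C_{\mathcal{L}}$; identifying the elements of $C_{r_1}\times\mathbf{c}$ with pairs $(p,q)$, $0\leq p\leq r_1$, $0\leq q\leq\mathcal{L}$, where $q$ records the position along $\mathbf{c}$, the $P$-rank of $(p,q)$ is $\rho'+p+q$. Hence, as $\mathbf{c}$ ranges over $S'$, the subposets $C_{r_1}\times\mathbf{c}$ partition $P$ into ranked subposets, each occupying the $P$-ranks $\rho',\ldots,R-\rho'$ and having Whitney numbers of the first kind palindromic about $R/2$.

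On each grid $C_{r_1}\times\mathbf{c}$ I use the ``staircase'' decomposition of Figure \ref{fig:prod-decomp}: for $0\leq i\leq\min(r_1,\mathcal{L})$ let
\[
\mathbf{d}_i=\{(i,q):0\leq q\leq\mathcal{L}-i\}\cup\{(p,\mathcal{L}-i):i<p\leq r_1\}.
\]
A grid point $(p,q)$ lies in $\mathbf{d}_p$ when $p+q\leq\mathcal{L}$ and in $\mathbf{d}_{\mathcal{L}-q}$ when $p+q>\mathcal{L}$, and in no other $\mathbf{d}_i$, so the $\mathbf{d}_i$ partition the grid; and each $\mathbf{d}_i$ is a saturated chain running from $(i,0)$, of $P$-rank $\rho'+i$, to $(r_1,\mathcal{L}-i)$, of $P$-rank $R-\rho'-i$, hence symmetric about $R/2$ in $P$. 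Since these chains partition $P$ and are saturated and symmetric about $R/2$, their union $S$ over all $\mathbf{c}\in S'$ is a symmetric chain decomposition of $P$ (this is the mechanism behind Remark \ref{rem:subdiv-decomp}), and the minimal elements of $S$ are the points $(i,0)$ of the grids, i.e., in the coordinates of $P=C_{r_1}\times P'$,
\[
S_{\text{init}}=\bigcup_{(a_2,\ldots,a_n)\in S'_{\text{init}}}\{(a_1,a_2,\ldots,a_n):0\leq a_1\leq\min(r_1,\,R'-2(a_2+\cdots+a_n))\}.
\]
Finally, $a_1\leq\min(r_1,\,R'-2\sum_{i=2}^{n}a_i)$ is exactly the pair of inequalities $a_1\leq r_1$ and $a_1\leq\sum_{i=2}^{n}r_i-2a_i$, so intersecting with the defining inequalities of $S'_{\text{init}}$ (the conditions for $s=2,\ldots,n$) produces precisely the right-hand side of \eqref{eq:S init}, which closes the induction. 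There is no conceptual difficulty here; the two points needing real care are keeping $C_{r_1}$ in its distinguished role throughout the recursion, and the elementary but fiddly verification that the pieces $\mathbf{d}_i$ partition $C_{r_1}\times\mathbf{c}$ into saturated symmetric chains for \emph{all} values of $r_1$ and $\mathcal{L}$ --- including $\mathcal{L}>r_1$, which is not the regime drawn in Figure \ref{fig:prod-decomp}, and where a careless argument would fail.
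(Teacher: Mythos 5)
Your proof is correct and follows essentially the same route as the paper's: induction on $n$, partitioning $C_{r_1}\times P'$ into the products $C_{r_1}\times \mathbf{c}$ over the symmetric chains $\mathbf{c}$ of $P'$, and decomposing each such grid by the same staircase chains as in Figure \ref{fig:prod-decomp}. The only difference is that you spell out the verification that the staircases partition the grid (including the regime $\mathcal{L}>r_1$ not drawn in the figure), which the paper leaves implicit.
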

\begin{proof}
    We are going to prove this statement by induction on $n$.
    If $n=1$, then $S = \{C_{r_1}\}$ and the only initial element is $(0)$.
    For any $n>1$ let us construct the decomposition as follows. By induction, we consider the symmetric chain decomposition $S'$ of $$P'=C_{r_2}\times \ldots \times C_{r_n}=\bigsqcup_i S'_i$$ that satisfies \eqref{eq:S init}.
    Let us take the partition 
    $$P=C_{r_1}\times P'=\bigsqcup_i C_{r_1}\times S'_i.$$ 
    We will give a symmetric chain decomposition of each component ${C_{r_1}\times S'_i}$; as these subsets are symmetric in $P$ this gives a symmetric chain decomposition of $P$ as discussed in Remark \ref{rem:subdiv-decomp}. 
    The initial elements $s'_i$ of $S'_i$ are exactly all possible vectors $(a_2,a_3,\ldots,a_n)$ that satisfy \eqref{eq:S init} for $2\leq s \leq n$.
    Notice also that their ranks are $\rho(s'_i)=\sum_{j=2}^n a_j$ and the length of $S'_i$ is exactly 
    \[
    \len(S'_i)=\rho(P')-2\rho(s'_i)=\sum_{j=2}^n r_j - 2 a_j
    \] 
    by symmetry.
    Now consider the symmetric chain decomposition given by $S_{j,i}$ chosen as
    \[
    (j,0) \prec (j,1) \prec \ldots \prec (j, \len(S_i') - j) \prec (j+1, \len(S_i') - j) \prec \ldots \prec (r_1, \len(S_i') - j),
    \]
    with $0 \leq j \leq \min(r_1,\len(S'_i))$. 
    This decomposition is depicted in Figure~\ref{fig:prod-decomp}.
    
    The initial elements in this decomposition are exactly the ones where $(a_2,\ldots, a_n)$ satisfies \eqref{eq:S init} for $P'$ and ${0 \leq a_1 \leq \min(r_1,\sum_{j=2}^n r_j-2a_j)}$.
    Taking the union of these decompositions over all $S'_i$ we get exactly the set of initial elements described in the statement. 
\end{proof}

We will now use this decomposition to give an explicit description of a well-chosen symmetric chain decomposition of $\FY$. Similar techniques were employed by Angarone, Nathanson and Reiner in \cite{angarone-nathanson-reiner}. Later, we will prove that the initial elements of this decomposition constitute a monomial order ideal, thus, proving Theorem \ref{thm:main1}.

\begin{theorem}\label{thm:description SFY}
    For any weakly ranked poset $P$ there exists a symmetric chain decomposition $S$ of $\FY$ such that the set of initial elements, which we denote by $\SFY$, coincides with the family of FY monomials of the form
        \[
        m = \prod_{k=1}^s x_{p_k}^{\ell_k},
        \]
        where $\zero = p_0 < p_1 < \cdots < p_s < \one$ is a chain of the poset $P$ and 
        \[
        1 \leq \ell_k \leq  \min\left(\rho(p_k) - \rho(p_{k-1}) - 1,\ \rho(P) - \rho(p_k) - 2\sum_{i= k+1}^{s}\ell_i\right).
        \]
\end{theorem}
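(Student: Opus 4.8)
The plan is to partition $\FY$, ordered by total degree, into pieces that are each isomorphic to a product of chains and symmetric about the degree $(\rho(P)-1)/2$, apply Theorem~\ref{thm:chain-decomp} to each piece, and read off the resulting set of initial elements. For a chain $\mathcal{C}\colon \zero = p_0 < p_1 < \cdots < p_s < \one$ of $P$, with the top inequality strict and allowing the degenerate case $s=0$ (so $\mathcal{C}=\{\zero\}$), write $d_k=\rho(p_k)-\rho(p_{k-1})$ and define
\[
D_{\mathcal{C}} = \left\{\, x_{p_1}^{\ell_1}\cdots x_{p_s}^{\ell_s}\, x_{\one}^{\ell_{s+1}} \ :\ 1\le \ell_k\le d_k-1 \text{ for } 1\le k\le s, \quad 0\le \ell_{s+1}\le \rho(P)-\rho(p_s)-1 \,\right\}
\]
(an empty set, to be discarded, whenever some $d_k=1$). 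These are exactly the FY monomials whose support contains $\{p_1,\dots,p_s\}$ and is contained in $\{p_1,\dots,p_s,\one\}$; since an FY monomial $m$ can then lie in $D_{\mathcal{C}}$ only for $\mathcal{C}=\{\zero\}\cup(\operatorname{supp}(m)\setminus\{\one\})$, and does lie in that $D_{\mathcal{C}}$ because the FY exponent bounds along $\mathcal{C}$ (and, when $\one\in\operatorname{supp}(m)$, the FY bound $1\le\ell\le\rho(P)-\rho(p_s)-1$ on the exponent of $x_{\one}$) coincide with the bounds defining $D_{\mathcal{C}}$, we obtain a partition $\FY=\bigsqcup_{\mathcal{C}}D_{\mathcal{C}}$.

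The map $(a_1,\dots,a_{s+1})\mapsto x_{p_1}^{a_1+1}\cdots x_{p_s}^{a_s+1}x_{\one}^{a_{s+1}}$ identifies $D_{\mathcal{C}}$, with the divisibility order, with the product of chains $C_{d_1-2}\times\cdots\times C_{d_s-2}\times C_{\rho(P)-\rho(p_s)-1}$, the total degree being the rank in this product shifted up by $s$. Using $\sum_{k=1}^{s}d_k=\rho(p_s)$ one checks that $D_{\mathcal{C}}$ occupies exactly the degrees $s, s+1, \dots, \rho(P)-1-s$, so its degree-enumerator is palindromic with center of symmetry $(\rho(P)-1)/2$, for every $\mathcal{C}$. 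Hence Remark~\ref{rem:subdiv-decomp}, applied to the ranked poset $\FY$ (of rank $\rho(P)-1$) and the partition above, shows that the union of any choice of symmetric chain decompositions of the pieces $D_{\mathcal{C}}$ is a symmetric chain decomposition of $\FY$. On each $D_{\mathcal{C}}$ I take the symmetric chain decomposition furnished by Theorem~\ref{thm:chain-decomp}, ordering the factors so that $C_{\rho(P)-\rho(p_s)-1}$ (the exponent of $x_{\one}$) comes last; this defines the decomposition $S$ of the statement, with $\SFY = S_{\text{init}}$.

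It remains to identify the initial elements. Setting $r_k=d_k-2$ for $k\le s$ and $r_{s+1}=\rho(P)-\rho(p_s)-1$, Theorem~\ref{thm:chain-decomp} describes the initial elements of $D_{\mathcal{C}}$ as the tuples satisfying $a_k\le r_k$ and $a_k\le\sum_{i>k}(r_i-2a_i)$ for all $k$. The constraint for $k=s+1$ forces $a_{s+1}=0$, i.e.\ $x_{\one}$ appears in no initial monomial — this is the reason for the condition $p_s<\one$. Substituting $\ell_k=a_k+1$ for $k\le s$, $\ell_{s+1}=0$, and simplifying $\sum_{i>k}(r_i-2a_i)$ via $\sum_{i=k+1}^{s}d_i=\rho(p_s)-\rho(p_k)$, the remaining inequalities collapse to exactly
\[
1\le \ell_k\le \min\left(\rho(p_k)-\rho(p_{k-1})-1,\ \ \rho(P)-\rho(p_k)-2\textstyle\sum_{i=k+1}^{s}\ell_i\right),
\]
which is the description in the statement; unioning the sets $D_{\mathcal{C}}\cap\SFY$ over all $\mathcal{C}$ completes the proof. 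The only genuinely nonroutine step is the choice of the pieces $D_{\mathcal{C}}$: adjoining the single extra chain factor $C_{\rho(P)-\rho(p_s)-1}$ to account for the exponent of $x_{\one}$ is precisely what makes every piece sit symmetrically about $(\rho(P)-1)/2$; everything else is bookkeeping, with the index shift $\ell=a+1$ in the inequality manipulation the one place requiring care.
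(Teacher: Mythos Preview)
Your proof is correct and follows essentially the same strategy as the paper: partition $\FY$ into the pieces $D_{\mathcal{C}}=\FY_{\mathcal{C}}$ indexed by chains not containing~$\one$, observe each is a product of chains sitting symmetrically about $(\rho(P)-1)/2$, apply Theorem~\ref{thm:chain-decomp} with the $x_{\one}$ factor placed last, and translate the description of initial elements via $\ell_k=a_k+1$. Your chain lengths $C_{d_k-2}\times\cdots\times C_{\rho(P)-\rho(p_s)-1}$ are in fact the correct ones (the paper's $C_{d_k-1}$ and $C_{d_{\one}}$ are off by one, as a rank count confirms), so your bookkeeping is cleaner but the argument is the same.
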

\begin{proof}
    Let us build a symmetric chain decomposition of $\FY$.
    
    First, for a chain $\cC = \{\zero = p_0 < p_1<p_2<\cdots<p_s<\one\}$ we define $\FY_\cC$ to be the subposet of $\FY$ consisting of monomials with support either $\cC$ or $\cC \cup \{\one\}$. The bottom element of $\FY_\cC$ is $\prod_{i} x_{p_i}$ with rank $s$, while its top element, $\prod_i x_{p_i}^{d_i}\ x_{\one}^{d_{\one}}$ has rank
    \[
    \sum_{i=1}^s (d_i - 1) + d_{\one}- 1= \rho(P) - s - 1. 
    \]
    Hence, $\FY_\cC$ is symmetric inside $\FY$. Moreover, since $\FY_\cC$ is isomorphic to the product of chains 
    $$C_{d_1-1}\times C_{d_2-1}\times \ldots \times C_{d_s-1} \times C_{d_{\one}},
    $$ 
    by Theorem \ref{thm:chain-decomp}, $\FY_\cC$ admits a symmetric chain decomposition $S$.
    We now describe the elements in $S_\text{init}$ using \ref{thm:chain-decomp}, where the vector $$(a_1,a_2,\ldots,a_s,a_{\one})$$ will correspond to $x_{p_1}^{a_1+1}x_{p_2}^{a_2+1}\ldots x_{p_s}^{a_s+1}x_{\one}^{a_{\one}}$. 
    First, let us note that the exponent of $x_{\one}$ will always be 0.
    Thus, the set may be described as
    $$\big\{x_{p_1}^{a_1+1}x_{p_2}^{a_2+1}\ldots x_{p_s}^{a_s+1} | a_i\leq d_i-1, a_k\leq d_{\one} + \sum_{i=k+1}^{s} (d_i-1)-2a_i \big\}.$$

    The theorem is proven by setting $\ell_i:=1+a_i$ after observing that
    \begin{align*}
        1 + (d_{\one}-1) + \sum_{i=k+1}^s(d_i-1 - 1) - 2a_i &= d_{\one} + \sum_{i=k+1}^s(d_i - 2\ell_i) \\
        &= \rho(P) - \rho(p_k) - 2 \sum_{i=k+1}^s\ell_i.
    \end{align*}
\end{proof}

\begin{remark}
    We observe that when we apply Theorem \ref{thm:chain-decomp}, the resulting symmetric chain decomposition depends on an explicit choice of ordering the terms in the cartesian product. The choice we make in the proof above is needed for the set $\SFY$ to be a monomial order ideal, which is used in the proof of Theorem \ref{thm:main1} below.
\end{remark}

\section{Proof of Theorem \ref{thm:main1}}\label{sec:main1}
    We are now ready to prove Theorem \ref{thm:main1}.
    According to Theorem \ref{thm:properties of characteristic Chow}, we only need to prove that $h$ forms a differential $O$-sequence. We will do so by constructing directly a monomial order ideal. The proof of the theorem will follow immediately from the following result.
    \begin{theorem}\label{thm:SFY monomial order ideal}
        Let $P$ be a poset.
        The set of monomials $\SFY$ is a monomial order ideal.
    \end{theorem}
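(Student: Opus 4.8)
The plan is to leverage the fact, already noted in the text, that $\FY$ is itself a monomial order ideal, so that for $m\in\SFY$ and $m'\mid m$ we get $m'\in\FY$ for free; what remains is to check the two extra features that carve $\SFY$ out of $\FY$: (i) the chain supporting $m'$ stays strictly below $\one$, and (ii) the ``symmetrizing'' upper bound $\ell_k\le\rho(P)-\rho(p_k)-2\sum_{i>k}\ell_i$ from Theorem~\ref{thm:description SFY}. Both will follow by a monotonicity argument once we record how the support of $m'$ sits inside that of $m$.

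Concretely, I would fix $m=\prod_{k=1}^s x_{p_k}^{\ell_k}\in\SFY$ with $\zero=p_0<p_1<\cdots<p_s<\one$, take $m'\mid m$, and write $m'=\prod_{k=1}^t x_{q_k}^{m_k}$ along a chain $\zero=q_0<q_1<\cdots<q_t\le\one$ with $1\le m_k\le\rho(q_k)-\rho(q_{k-1})-1$, which is possible because $m'\in\FY$. Since $\operatorname{supp}(m')\subseteq\operatorname{supp}(m)\subseteq\{p_1,\dots,p_s\}$, there is a strictly increasing map $\sigma\colon\{1,\dots,t\}\to\{1,\dots,s\}$ with $q_k=p_{\sigma(k)}$, and $m_k\le\ell_{\sigma(k)}$ because $m'\mid m$. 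In particular each $q_k=p_{\sigma(k)}<\one$, settling (i). For (ii), fix $k$; the indices $\sigma(k+1)<\cdots<\sigma(t)$ all lie in $\{\sigma(k)+1,\dots,s\}$, so, using $m_i\le\ell_{\sigma(i)}$ and $\ell_j\ge 1$,
\[
\sum_{i=k+1}^t m_i\ \le\ \sum_{i=k+1}^t\ell_{\sigma(i)}\ \le\ \sum_{j=\sigma(k)+1}^s\ell_j ,
\]
whence
\[
\rho(P)-\rho(q_k)-2\sum_{i=k+1}^t m_i\ \ge\ \rho(P)-\rho(p_{\sigma(k)})-2\sum_{j=\sigma(k)+1}^s\ell_j\ \ge\ \ell_{\sigma(k)}\ \ge\ m_k ,
\]
the middle inequality being exactly the defining inequality of $\SFY$ for $m$ at index $\sigma(k)$. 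Together with $1\le m_k\le\rho(q_k)-\rho(q_{k-1})-1$ this shows $m'$ satisfies all the conditions of Theorem~\ref{thm:description SFY}, so $m'\in\SFY$ (the case $m'=1$, i.e.\ $t=0$, being trivial), which is Theorem~\ref{thm:SFY monomial order ideal}.

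I do not expect a genuine obstacle here. The one point that needs care is that passing to a divisor may simultaneously lower exponents and delete elements of the chain, thereby re-indexing it; rather than splitting into cases (``decrement an exponent'' versus ``drop a variable''), it is cleaner to encode the passage to the divisor once and for all through the order-embedding $\sigma$ of support chains, after which (ii) reduces to the single monotonicity estimate $\sum_{i=k+1}^t m_i\le\sum_{j=\sigma(k)+1}^s\ell_j$ displayed above.
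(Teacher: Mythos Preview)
Your proof is correct and follows essentially the same approach as the paper: both verify the two defining inequalities of $\SFY$ directly by monotonicity in the exponents. The only difference is cosmetic---you re-index the support of $m'$ via the order-embedding $\sigma$ and invoke that $\FY$ is already a monomial order ideal to handle the FY bound, whereas the paper keeps all indices with possibly zero exponents and re-derives the FY bound by locating the nearest nonzero predecessor; the monotonicity check for the ``symmetric'' bound is identical in both.
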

    \begin{proof}
        To prove the statement, it is sufficient to show that for any monomial
        $$m = \prod_{k=1}^s x_{p_k}^{\ell_k}\in \SFY,$$ any monomial $m'=\prod_{k=1}^s x_{p_k}^{\ell'_k}$ that divides $m$ is also in $\SFY$.
        By the definition of $\SFY$ we know that \begin{equation}\label{eq:mon-1}
            \ell_k\leq\rho(p_k)-\rho(p_{k-1})-1
        \end{equation} and that \begin{equation}\label{eq:mon-2}
            \ell_k\leq \rho(\one)-\rho(p_k)-2\sum_{i=k+1}^{s} \ell_i.
        \end{equation}

        To show that $m'$ belongs to $\SFY$ we need to show that every $\ell'_k$ satisfies the same inequalities (written for the support of $m'$).
        As $\ell'\leq \ell$, we know that 
        $$\ell'_k \leq \ell_k \leq \rho(\one)-\rho(p_k)-2\sum_{i=k+1}^{s} \ell_i \leq \rho(\one)-\rho(p_k)-2\sum_{i=k+1}^{s} \ell'_i,$$
        thus \eqref{eq:mon-2} holds for the exponents of $m'$.
        We move on to \eqref{eq:mon-1}. 
        For a fixed $k$, let us assume that $p_s$ is the largest element below $p_k$ such that $\ell'_s>0$, or $\zero$ if no such element exists. Now, we need to show that $$\ell'_k\leq \rho(p_{k})-\rho(p_s)-1.$$
        Since $p_s\leq p_{k-1}<p_k$, we can write $$\ell'_k\leq \ell_k\leq \rho(p_k)-\rho(p_{k-1})-1\leq \rho(p_k)-\rho(p_s)-1.$$
         
    \end{proof}

\begin{proof}[Proof of Theorem \ref{thm:main1} and thus of Corollary \ref{cor:main lefschetz}]
    %By Theorem \ref{thm:properties of characteristic Chow}, we only need to prove that
    The fact that $h$ forms a differential $O$-sequence follows from Theorem \ref{thm:description SFY} and Theorem \ref{thm:SFY monomial order ideal} as they imply that the $\FY$ $g$-vector is the $h$-vector of the monomial order ideal $\SFY$. The corollary follows by Theorem \ref{thm:characterization SI sequence}.
\end{proof}

\section{Proof of Theorem \ref{thm:main2}}\label{sec:main2}
    In this section, we prove Theorem \ref{thm:main2}.
    \begin{proof}[Proof of Theorem \ref{thm:main2}]
        We show that any monomial that is maximal under divisibility has the correct degree, i.e., $\left\lfloor(n-1)/2\right\rfloor$. To do so, consider a maximal monomial $m$ and write $m = x_{p_k}^{\ell}\cdot m'$, where $m'$ is a monomial of degree $r$ with variables corresponding to elements that are strictly greater than the element $p_k$ of rank $k$. By contradiction, suppose that $\ell + r < \left\lfloor(n-1)/2\right\rfloor$. Clearly, $\ell = \min(n-k-2r, k-1)$, otherwise we could increase this exponent without violating the defining conditions, contradicting the maximality of $m$. We now distinguish two cases, 
        \begin{enumerate}
            \item[i)] $\ell = n-k-2r \leq k-1$, or
            \item[ii)] $\ell = k-1 < n-k-2r$.
        \end{enumerate}
        Let us start from case $i)$. Since $m$ is maximal, in particular we cannot add a variable $x_{p_2}$ corresponding to a rank $2$ element and still have a valid monomial. For us to be able to add $x_{p_2}$, we would need that $n-2-2(\ell + r) \geq 1$ and $k-2-1\geq \ell$. This means that at least one of the following must hold
        \begin{enumerate}
            \item[a)] $n-2-2(\ell + r) \leq 0$,
            \item[b)] $k-2-1 \leq \ell-1$. 
        \end{enumerate}
        If $a)$ holds, then $\ell + r \geq \left\lfloor\frac{n-1}{2}\right\rfloor$, implying that $m$ has the maximal possible degree. If $a)$ does not hold, then $b)$ does. This means that $r \leq \frac{n-2\ell-3}{2}$ and $\ell \geq k-2$. However, by $i)$ we know that $r=\frac{n-k-\ell}{2}$ and this would be satisfied if and only if  $\ell \leq k-3$. This concludes case $i)$.
        
        We move on to case $ii)$. Now, $m'$ can be seen as a monomial for the chain of rank $n-k$, so its maximal possible degree is $\left\lfloor(n-k-1)/2\right\rfloor$. If $r= \left\lfloor(n-k-1)/2\right\rfloor$, then $n-k-2r$ is either equal to $2$, if $n-k$ is even, or $1$, if $n-k$ is odd. However, since $\ell \geq 1$, then $n-k-2r \geq 2$. 
        To summarize,
        \[
            \ell = k-1 < n-k-2r = 2,
        \]
        which implies that $\ell = 1$, $k=2$ and $n$ is even. So, $$\ell + r = 1 + \left\lfloor\frac{n-2-1}{2}\right\rfloor = \left\lfloor\frac{n-1}{2}\right\rfloor.$$
        The last case to check is when $r < \left\lfloor(n-k-1)/2\right\rfloor$. If $r\leq \frac{n-2k-1}{2}$, then we could increase $r$ by one without violating any condition, contradicting the maximality of $m$. We can then assume that $n-k-2r-(k-1)=1$, which implies that $n$ is even and $r= \frac{n-2k}{2}$. Therefore,
        \[
        \ell + r = k-1 + \frac{n-2k}{2} = \frac{n-2}{2} = \left\lfloor \frac{n-1}{2} \right\rfloor.
        \]
        This concludes case $ii)$ and the proof. 
    \end{proof}

    Let us point out, that the theorem above relies on the fact that our poset is ranked (as opposed to being weakly ranked). We now show that the statement would not hold in the weakly ranked case. 
\begin{figure}
        \centering
        \begin{tikzpicture}[
            scale=1,
            every node/.style={ inner sep=1.5pt},
            level distance=1.2cm
        ]
        
        \node (0) at (0,0) {$\zero$};
        \node (a1) at (-3,4) {$a_1$};
        \node (a2) at (-2,4) {$a_2$};
        \node (a3) at (-1,4) {$a_3$};
        \node at (-0.5,4) {$\ldots$};
        \node (am) at (-0,4) {$a_m$};
        
        % Right chain (b-chain)
        \node (b1) at (2,1) {$b_1$};
        
        % Antichain replacing b2
        \node (b2) at (2,2) {$b_{2}$};
        
        \node (b3) at (2,3) {$b_3$};
        \node (b4) at (2,4) {$b_4$};

        \node (1) at (0,5) {$\one$};
        
        % Cover relations (a-chain)
        \draw (0) -- (a1);
        \draw (0) -- (a2);
        \draw (0) -- (a3);
        \draw (0) -- (am);

        \draw (0) -- (b1);
        \draw (b1) -- (b2);
        \draw (b2) -- (b3);
        \draw (b3) -- (b4);
        
        \draw (a1) -- (1);
        \draw (a2) -- (1);
        \draw (a3) -- (1);
        \draw (am) -- (1);
        \draw (b4) -- (1);
        
        \end{tikzpicture}
        \caption{Counterexample to pureness}
        \label{fig:non-pure}
    \end{figure}
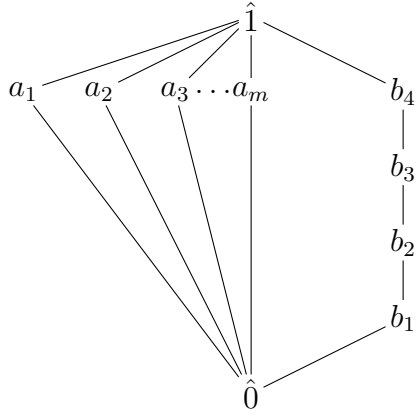

\begin{example}
    Let us consider the poset $P$ depicted in Figure \ref{fig:non-pure}. Let $\rho(a_i)=4$, $\rho(b_i)=i$ and $\rho(\one) = 5$.
        It is not hard to check that the $h$-vector of $\SFY$ in this case is $(1,m+3,2)$.
    Choosing a sufficiently large $m$ will imply that the inequality
    $$\left\lfloor\frac{h_1+1}{2}\right\rfloor \leq h_2 \leq \binom{h_1+1}{2}.$$
    from Proposition \ref{prop:pure3seq} cannot hold. Indeed, from our construction any $x_{a_i}$ and $x_{b_2}x_{b_4}$ would be in $\SFY$ and would be maximal.
\end{example}

\section{Log-concavity of the Chow polynomial}\label{sec:log-concavity}
In this section, we will discuss the log-concavity of Chow polynomials based on the weak rank of the poset.
We will use the following simple statement.
\begin{proposition}\label{prop:logdifference}
    For a symmetric and unimodal sequence $h$, if $\Delta h$ is log-concave, then $h$ is log-concave as well. 
\end{proposition}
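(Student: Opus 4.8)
The plan is to reduce the claim, via the palindromicity and unimodality of $h$, to the classical fact that the sequence of partial sums of a positive log-concave sequence is again log-concave. Write $h = (h_0,h_1,\ldots,h_e)$, set $a_i := h_i - h_{i-1}$, so that $\Delta h = (h_0,a_1,\ldots,a_{\lfloor e/2\rfloor})$ with all $a_i \ge 0$ by unimodality, and let $S_j := h_0 + a_1 + \cdots + a_j$ be the partial sums of $\Delta h$; by construction $S_j = h_j$ for $0 \le j \le \lfloor e/2\rfloor$. First I would use palindromicity: the inequality $h_i^2 \ge h_{i-1}h_{i+1}$ at an index $i$ is literally the same as the one at the index $e-i$, so it is enough to prove it for $1 \le i \le \lfloor e/2\rfloor$. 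For $1 \le i \le \lfloor e/2\rfloor - 1$ the three entries $h_{i-1},h_i,h_{i+1}$ coincide with $S_{i-1},S_i,S_{i+1}$, so this whole range is precisely the assertion that $(S_0,\ldots,S_{\lfloor e/2\rfloor})$ is log-concave. For the single leftover index $i = \lfloor e/2\rfloor$, a short check (splitting on the parity of $e$ and using $h_{\lfloor e/2\rfloor + 1} = h_{e - \lfloor e/2\rfloor - 1}$) shows the required inequality collapses to $h_{\lfloor e/2\rfloor} \ge h_{\lfloor e/2\rfloor - 1}$, which holds by unimodality.

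It then remains to prove that $(S_0,\ldots,S_{\lfloor e/2\rfloor})$ is log-concave, i.e.\ $S_j^2 \ge S_{j-1}S_{j+1}$ for $1 \le j \le \lfloor e/2\rfloor - 1$. Writing $\Delta h = (b_0,b_1,\ldots,b_m)$ with $m = \lfloor e/2\rfloor$ and substituting $S_{j-1} = S_j - b_j$, $S_{j+1} = S_j + b_{j+1}$, one gets, after expanding the partial sums and shifting the index in one of the two sums,
\[
S_j^2 - S_{j-1}S_{j+1} = b_j S_{j+1} - b_{j+1}S_j = b_0 b_j + \sum_{k=1}^{j+1}(b_j b_k - b_{j+1} b_{k-1}).
\]
Since $\Delta h$ is log-concave, its consecutive ratios $b_1/b_0 \ge b_2/b_1 \ge \cdots$ are non-increasing, so $b_k/b_{k-1} \ge b_{j+1}/b_j$ for every $k \le j+1$; hence each summand $b_j b_k - b_{j+1}b_{k-1}$ is nonnegative, and so is the whole expression, as desired.

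The step I expect to need the most care — and the only genuine obstacle — is the positivity bookkeeping behind the ``non-increasing ratios'' reformulation, which presupposes that the relevant entries of $\Delta h$ are positive. So one should first observe that $\Delta h$ has no internal zeros (automatic in our applications, where $\Delta h$ is an $O$-sequence and therefore vanishes identically after its last nonzero term) and then dispatch the degenerate cases — a constant $h$, or a $\Delta h$ ending in a run of zeros — directly. Everything else is a matter of keeping the indices straight, in particular isolating the unique middle index at which one must fall back on unimodality rather than on the partial-sum inequality.
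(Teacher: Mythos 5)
Your argument is correct and follows essentially the same route as the paper: both reduce log-concavity of $h$ to log-concavity of the partial sums of $\Delta h$ (the paper by citing that convolution with the all-ones sequence preserves log-concavity, you by the direct telescoping computation), and both dispatch the middle index via unimodality and the upper half via palindromicity. Your caveat about internal zeros is well taken and is the one point where the statement as literally written needs care --- the convolution theorem the paper invokes also requires no internal zeros, and indeed $h=(1,1,1,2,1,1,1)$ has $\Delta h=(1,0,0,1)$ weakly log-concave while $h$ is not log-concave --- but in the intended application $\Delta h$ is an $O$-sequence beginning with $1$, hence has no internal zeros, exactly as you observe.
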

\begin{proof}
    Let us prove that $h_i^2\geq h_{i-1}h_{i+1}$ for each index ${1\leq i\leq e-1}$. 
    Let us use the fact that the convolution of two log-concave sequences is log-concave \cite{logconcave}. Specifically, let us take the convolution of $(\Delta h_0,\Delta h_1,\ldots, \Delta h_{\lfloor (e-1)/2\rfloor})$ and the sequence $(1,1,\ldots,1)$ with length at least $\lfloor(e-1)/2\rfloor$. The first $\lfloor(e-1)/2\rfloor$ elements of this convolution are exactly $$h_0,h_1,\ldots h_{\lfloor(e-1)/2\rfloor}.$$ This implies the inequality for indices $1\leq i< \lfloor (e-1)/2\rfloor$.
    Since $h$ is palindromic, this also shows the inequality for $\lfloor e/2\rfloor< i\leq e-1$.
    Lastly, the inequality for $\lfloor (e-1)/2\rfloor$ and $\lfloor e/2\rfloor$ (these two might coincide) follows directly from unimodality. 
\end{proof}

Theorem \ref{thm:main1} allows us to get very strong bounds for the elements of the differential sequence of the coefficients of the Chow polynomial for low ranks. In particular, we are able to show log-concavity for posets of low rank.

\begin{theorem}
    Let $P$ be a poset of weak rank at most $6$. Then its Chow polynomial is log-concave.
\end{theorem}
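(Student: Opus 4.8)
The plan is to reduce to the differential sequence via Proposition \ref{prop:logdifference} and then exploit that, in this rank range, the differential sequence is too short to fail log-concavity once it is known to be an $O$-sequence. Write $h = h(P) = (1, h_1, \ldots, h_e)$ for the coefficient sequence of the Chow polynomial, so that $e = n - 1 \le 5$. By Theorem \ref{thm:properties of characteristic Chow}, $h$ is palindromic and unimodal, hence by Proposition \ref{prop:logdifference} it suffices to prove that $g \coloneq \Delta h = (1, h_1 - 1, \ldots, h_{\lfloor e/2 \rfloor} - h_{\lfloor e/2\rfloor - 1})$ is log-concave. Since $e \le 5$ we have $\lfloor e/2\rfloor \le 2$, so $g$ has at most three entries.

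If $g$ has fewer than three entries, log-concavity is vacuous, so assume $g = (1, g_1, g_2)$; the only inequality to check is $g_1^2 \ge g_2$. By Theorem \ref{thm:main1}, $g$ is an $O$-sequence, and Macaulay's bound (Theorem \ref{thm:Odescription}) gives $g_2 \le \binom{g_1+1}{2}$ --- concretely, $g$ is the $h$-vector of a monomial order ideal, and a degree-two monomial is a product of at most two of its $g_1$ degree-one monomials, whence $g_2 \le \binom{g_1}{2}+g_1 = \binom{g_1+1}{2}$. When $g_1 \ge 1$ one has $\binom{g_1+1}{2} \le g_1^2$, while when $g_1 = 0$ one has $g_2 = 0$; either way $g_1^2 \ge g_2$. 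Thus $g$ is log-concave, and Proposition \ref{prop:logdifference} completes the argument.

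I do not expect a genuine obstacle: the degree-$5$ ceiling forces $g$ to have length at most $3$, so a single quadratic inequality remains, and that inequality is exactly the content of the Macaulay characterization of $O$-sequences (modulo the trivial case $g_1 = 0$, handled by the fact that an $O$-sequence cannot increase after reaching $0$). This also explains why $6$ is the threshold: in weak rank $7$ the sequence $g$ acquires a fourth entry, and a four-term $O$-sequence need not be log-concave, consistent with the counterexamples constructed afterwards.
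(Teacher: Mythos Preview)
Your proof is correct and follows essentially the same route as the paper's own argument: reduce to the differential sequence via Proposition~\ref{prop:logdifference}, observe that in weak rank $\le 6$ the $g$-vector has length at most three, and then use Macaulay's bound $g_2 \le \binom{g_1+1}{2} \le g_1^2$. You are in fact slightly more careful than the paper in treating the degenerate case $g_1 = 0$ separately.
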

    \begin{proof}
    The statement is trivial for weak rank at most $4$. If $P$ has weak rank $5$ or $6$, it has a $\FY$ $g$-vector of length $3$.
    By Proposition \ref{thm:Odescription} and since $g_1 \geq 1$, we can write 
    $$1\cdot g_2\leq g_1 \frac{(g_1+1)}{2} \leq g_1^2.$$ 
    The theorem now follows by Proposition \ref{prop:logdifference}. 
    \end{proof}
    \begin{remark}
        The only rank-exhaustive result we are aware of is \cite[Theorem 5.4]{ferroni-matherne-stevens-vecchi}, where real-rootedness (and therefore log-concavity) of Chow polynomials of geometric lattices of rank at most $5$ is proven exploiting the Koszulness of the Chow ring of the matroid. That kind of analysis cannot be extended yet to generic posets because there is no analogue of such a ring yet.
    \end{remark}
    We conclude this section by exhibiting a ranked poset with non log-concave Chow polynomial for any rank $\rho(P) > 6$.

    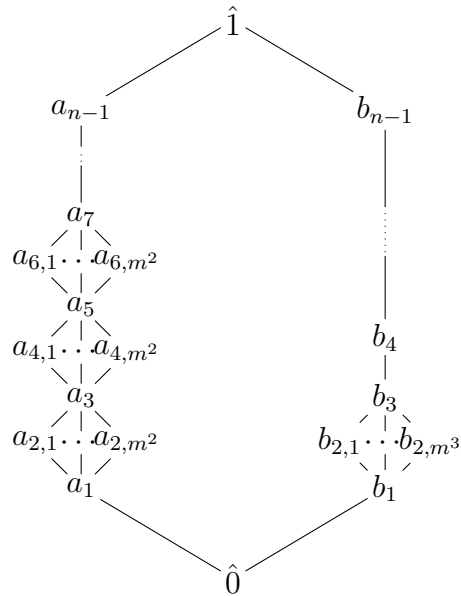
\begin{figure}
        \centering
        \begin{tikzpicture}[
            scale=1,
            every node/.style={ inner sep=1.5pt},
            level distance=1.2cm
        ]
        
        % Left chain (a-chain)
        \node (0) at (2,0) {$\zero$};
        \node (a1) at (0,1.2) {$a_1$};
        
        % Antichain replacing a2
        \node (a21) at (-0.6,1.8) {$a_{2,1}$};
        \node (a22) at (0,1.8) {$\cdots$};
        \node (a23) at (0.6,1.8) {$a_{2,m^2}$};
        
        \node (a3) at (0,2.4) {$a_3$};
        
        % Antichain replacing a4
        \node (a41) at (-0.6,3.0) {$a_{4,1}$};
        \node (a42) at (0,3.0) {$\cdots$};
        \node (a43) at (0.6,3.0) {$a_{4,m^2}$};
        
        \node (a5) at (0,3.6) {$a_5$};
        
        % Antichain replacing a6
        \node (a61) at (-0.6,4.2) {$a_{6,1}$};
        \node (a62) at (0,4.2) {$\cdots$};
        \node (a63) at (0.6,4.2) {$a_{6,m^2}$};
        
        \node (a7) at (0,4.8) {$a_{7}$};
        
        \node[inner sep=0] (adots1) at (0,5.5) {};
        \node[inner sep=0] (adots2) at (0,5.7) {};
        \node (ar) at (0,6.2) {$a_{n-1}$};
        \node (1) at (2,7.4) {$\one$};
        
        % Right chain (b-chain)
        \node (b1) at (4,1.2) {$b_1$};
        
        % Antichain replacing b2
        \node (b21) at (3.4,1.8) {$b_{2,1}$};
        \node (b22) at (4,1.8) {$\cdots$};
        \node (b23) at (4.6,1.8) {$b_{2,m^3}$};
        
        \node (b3) at (4,2.4) {$b_3$};
        \node (b4) at (4,3.2) {$b_4$};
        \node[inner sep=0] (bdots1) at (4,4.3) {};
        \node[inner sep=0] (bdots2) at (4,4.9) {};
        \node (br) at (4,6.2) {$b_{n-1}$};

        % Cover relations (a-chain)
        \draw (0) -- (a1);
        \draw (a1) -- (a21);
        \draw (a1) -- (a22);
        \draw (a1) -- (a23);
        
        \draw (a21) -- (a3);
        \draw (a22) -- (a3);
        \draw (a23) -- (a3);
        
        \draw (a3) -- (a41);
        \draw (a3) -- (a42);
        \draw (a3) -- (a43);
        
        \draw (a41) -- (a5);
        \draw (a42) -- (a5);
        \draw (a43) -- (a5);
        
        \draw (a5) -- (a61);
        \draw (a5) -- (a62);
        \draw (a5) -- (a63);
        
        \draw (a61) -- (a7);
        \draw (a62) -- (a7);
        \draw (a63) -- (a7);
        \draw (a7) -- (adots1);
        \draw[dotted] (adots1) -- (adots2);
        \draw (adots2) -- (ar);
        
        \draw (ar) -- (1);
        
        % Cover relations (b-chain)
        \draw (0) -- (b1);
        \draw (b1) -- (b21);
        \draw (b1) -- (b22);
        \draw (b1) -- (b23);
        
        \draw (b21) -- (b3);
        \draw (b22) -- (b3);
        \draw (b23) -- (b3);
        
        \draw (b3) -- (b4);
        \draw (b4) -- (bdots1);
        \draw[dotted] (bdots1) -- (bdots2);
        \draw (bdots2) -- (br);
        \draw (br) -- (1);
        
        \end{tikzpicture}
        \caption{Counterexample to log-concavity for $\rho(P)\geq 7$}
        \label{fig:non-log-concave}
    \end{figure}
    
\begin{example}
    Let $n>6$ and $m$ be a positive integer. Consider the poset depicted in Figure \ref{fig:non-log-concave}.
    Let us bound the order of magnitude for $h_1,h_2,h_3$ in terms of $m$.
    First, $h_1 \geq m^3$, as $h_1$ is equal to the number of elements of rank greater than $2$.  
    Then, we look at $h_2$. Since we are working with FY monomials, we do not consider products of variables corresponding to incomparable elements.
    The number of rank two FY monomials that do not contain elements of the form $a_{2k,j}$ or $b_{2,j}$ is constant in $m$. Then, each monomial can contain at most one element of the form $b_{2i,j}$ or at most two elements of the form $a_{2k,j}$. In total, we then get that $h_2 \in O(m^4)$. Similarly, $h_3 \geq m^6$, since $\rho(P) \geq 7$ and we can allow three elements of the form $a_{2k,j}$.
    Let us now fix $C$ such that $h_2 \leq C \cdot m^4$. For $m > C^2$, we get that
    \[h_2^2 \leq  Cm^4 \cdot C m^4 < m^9 = m^3 \cdot m^6 \leq h_1h_3,\]
    which violates log-concavity.
    We note that these posets are ranked series-parallel lattices, but none of them is Cohen--Macaulay.
\end{example}

\bibliographystyle{amsalpha}
\bibliography{bibliography}

\providecommand{\bysame}{\leavevmode\hbox to3em{\hrulefill}\thinspace}
\providecommand{\MR}{\relax\ifhmode\unskip\space\fi MR }
% \MRhref is called by the amsart/book/proc definition of \MR.
\providecommand{\MRhref}[2]{%
  \href{http://www.ams.org/mathscinet-getitem?mr=#1}{#2}
}
\providecommand{\href}[2]{#2}
\begin{thebibliography}{dBvETK51}

\bibitem[AHK18]{chowring}
Karim Adiprasito, June Huh, and Eric Katz, \emph{Hodge theory for combinatorial geometries}, Ann. Math. (2) \textbf{188} (2018), no.~2, 381--452 (English).

\bibitem[ANR25]{angarone-nathanson-reiner}
Robert Angarone, Anastasia Nathanson, and Victor Reiner, \emph{Chow rings of matroids as permutation representations}, Journal of the London Mathematical Society \textbf{111} (2025), no.~1, e70039.

\bibitem[BL80]{gThmSufficiency}
Louis Billera and Carl Lee, \emph{Sufficiency of {M}c{M}ullen's condition for f-vectors of simplicial polytopes}, Bulletin of The American Mathematical Society - BULL AMER MATH SOC \textbf{2} (1980), 181--185.

\bibitem[Bre24]{MR4780720}
Francesco Brenti, \emph{Some open problems on {C}oxeter groups and unimodality}, Open problems in algebraic combinatorics, Proc. Sympos. Pure Math., vol. 110, Amer. Math. Soc., Providence, RI, [2024] \copyright 2024, pp.~23--37. \MR{4780720}

\bibitem[BV25a]{TNrealrooted}
Petter Brändén and Lorenzo Vecchi, \emph{Chow polynomials of totally nonnegative matrices and posets}, 2025.

\bibitem[BV25b]{uniformrealrooted}
\bysame, \emph{Chow polynomials of uniform matroids are real-rooted}, 2025.

\bibitem[CFL25]{coron-li-ferroni}
Basile Coron, Luis Ferroni, and Shiyue Li, \emph{Chow polynomials of rank-uniform labeled posets}, 2025.

\bibitem[dBvETK51]{chainDecomp}
N.~G. de~Bruijn, C.~van Ebbenhorst~Tengbergen, and D.~Kruyswijk, \emph{On the set of divisors of a number}, Nieuw Archief voor Wiskunde, serie 2, 23, 1951.

\bibitem[FMSV24]{ferroni-matherne-stevens-vecchi}
Luis Ferroni, Jacob~P. Matherne, Matthew Stevens, and Lorenzo Vecchi, \emph{{H}ilbert–{P}oincaré series of matroid {C}how rings and intersection cohomology}, Advances in Mathematics \textbf{449} (2024), 109733.

\bibitem[FMV24]{ferroni-matherne-vecchi}
Luis Ferroni, Jacob~P. Matherne, and Lorenzo Vecchi, \emph{Chow functions for partially ordered sets}, arXiv e-prints (2024), arXiv:2411.04070.

\bibitem[FY04]{FY}
Eva~M. Feichtner and Sergey Yuzvinsky, \emph{Chow rings of toric varieties defined by atomic lattices}, Inventiones Mathematicae \textbf{155} (2004), 515--536.

\bibitem[Hib89]{Hibi1989WhatCB}
Takayuki Hibi, \emph{What can be said about pure {O}-sequences?}, J. Comb. Theory A \textbf{50} (1989), 319--322.

\bibitem[HMNW03]{Lefschetz}
Tadahito Harima, Juan~C. Migliore, Uwe Nagel, and Junzo Watanabe, \emph{The {W}eak and {S}trong {L}efschetz properties for {A}rtinian {K}-algebras}, Journal of Algebra \textbf{262} (2003), no.~1, 99--126.

\bibitem[HS25]{hoster-stump}
Elena Hoster and Christian Stump, \emph{Chow polynomials of simplicial posets with positive $h$-vector are real-rooted}, 2025.

\bibitem[Iar94]{gorensteinpdduality}
Anthony~A. Iarrobino, \emph{Associated graded algebra of a {G}orenstein {A}rtin algebra}, American Mathematical Society: Memoirs of the American Mathematical Society, American Mathematical Society, 1994.

\bibitem[Mac26]{MacaulaySomePO}
Francis~S. Macaulay, \emph{Some properties of enumeration in the theory of modular systems}, Proceedings of The London Mathematical Society (1926), 531--555.

\bibitem[McM71]{g-conjecture}
Peter McMullen, \emph{The numbers of faces of simplicial polytopes}, Isr. J. Math. \textbf{9} (1971), no.~4, 559--570 (en).

\bibitem[McM93]{gThmNecessity2}
\bysame, \emph{On simple polytopes}, Inventiones Mathematicae \textbf{113} (1993), no.~1, 419–444.

\bibitem[Mer98]{logconcave}
Milan Merkle, \emph{Convolutions of logarithmically concave functions}, Publikacije Elektrotehničkog fakulteta. Serija Matematika (1998), no.~9, 113--117.

\bibitem[MNZ13]{Migliore2013}
Juan Migliore, Uwe Nagel, and Fabrizio Zanello, \emph{Pure {O}-sequences: Known results, applications, and open problems}, Commutative Algebra: Expository Papers Dedicated to David Eisenbud on the Occasion of His 65th Birthday (Irena Peeva, ed.), Springer New York, New York, NY, 2013, pp.~527--550.

\bibitem[Som27]{DehnSommerville}
Duncan M.~Y. Sommerville, \emph{The relations connecting the angle-sums and volume of a polytope in space of n dimensions}, Proceedings of the Royal Society of London. Series A, Containing Papers of a Mathematical and Physical Character \textbf{115} (1927), no.~770, 103--119.

\bibitem[Sta78]{Osequence1}
Richard~P. Stanley, \emph{Hilbert functions of graded algebras}, Advances in Mathematics \textbf{28} (1978), no.~1, 57--83.

\bibitem[Sta06]{gThmNecessity}
Richard~P. Stanley, \emph{The number of faces of simplicial polytopes and spheres}, Annals of the New York Academy of Sciences \textbf{440} (2006), 212 -- 223.

\end{thebibliography}

\end{document}